\def\R{{\mathbb {R}}}
\def\N{{\mathbb {N}}}
\def\A{{\mathcal {A}}}
\newtheorem{teo}{Theorem}[section]
\newtheorem{lema}[teo]{Lemma}
\newtheorem{prop}[teo]{Proposition}
\theoremstyle{remark}
\newtheorem{remark}[teo]{Remark}
\theoremstyle{definition}
\numberwithin{equation}{section}
\begin{document}
\title[Sobolev embedding for variable exponent spaces]{On the Sobolev embedding theorem for variable exponent spaces in the critical range}

\author[J. Fern\'andez Bonder, N. Saintier and A. Silva]{Juli\'an Fern\'andez Bonder, Nicolas Saintier and Analia Silva}

\address[J. Fern\'andez Bonder and A. Silva]{IMAS - CONICET and Departamento de Matem\'atica, FCEyN - Universidad de Buenos Aires\hfill\break \indent Ciudad Universitaria, Pabell\'on I  (1428) Buenos Aires, Argentina.}

\address[N. Saintier]{UNGS and Departamento de Matem\'atica, FCEyN - Universidad de Buenos Aires\hfill\break \indent Ciudad Universitaria, Pabell\'on I  (1428) Buenos Aires, Argentina.}

\email[J. Fernandez Bonder]{jfbonder@dm.uba.ar}

\urladdr[J. Fernandez Bonder]{http://mate.dm.uba.ar/~jfbonder}

\email[A. Silva]{asilva@dm.uba.ar}

\email[N. Saintier]{nsaintie@dm.uba.ar}

\urladdr[N. Saintier]{http://mate.dm.uba.ar/~nsaintie}

\subjclass[2000]{46E35,35B33}

\keywords{Sobolev embedding, variable exponents, critical exponents, concentration compactness}

\begin{abstract}
In this paper we study the Sobolev embedding theorem for variable exponent spaces with critical exponents. We find conditions on the best constant in order to guaranty the existence of extremals.
The proof is based on a suitable refinement of the estimates in the Concentration--Compactness Theorem for variable exponents and an adaptation of a convexity argument due to  P.L. Lions, F. Pacella and M. Tricarico.
\end{abstract}

\maketitle

\section{Introduction}
In this paper we study the existence problem for extremals of the Sobolev immersion Theorem for variable exponents $W^{1,p(x)}_0(\Omega)\hookrightarrow L^{q(x)}(\Omega)$. By extremals we mean functions where the following infimum is attained
\begin{equation}\label{Spq}
S(p(\cdot),q(\cdot),\Omega) = \inf_{v\in W^{1,p(x)}_0(\Omega)}\frac{\|\nabla v\|_{L^{p(x)}(\Omega)}}{\|v\|_{L^{q(x)}(\Omega)}}.
\end{equation}
Here $\Omega\subset\R^N$ is a bounded open set and the variable exponent spaces $L^{q(x)}(\Omega)$ and $W^{1,p(x)}_0(\Omega)$ are defined in the usual way. We refer to the book \cite{libro} for the definition and properties of these spaces, though in Section 2 we review the results relevant for this paper.

The {\em critical exponent} is defined as usual
$$
p^*(x)=\begin{cases}
\frac{Np(x)}{N-p(x)} & \mbox{if } p(x)<N,\\
\infty & \mbox{if } p(x)\ge N.
\end{cases}
$$

When the exponent $q(x)$ is {\em subcritical}, i.e. $1\le q(x)< p^*(x)-\delta$ for some $\delta>0$, the immersion is compact (see \cite{Fan}, Theorem 2.3), so the existence of extremals follows easily by direct minimization. But when the subcriticality is violated, i.e. $1\le q(x)\le p^*(x)$ with $\A = \{x\in\Omega\colon q(x)=p^*(x),\ p(x)<N\}\not=\emptyset$ the compactness of the immersion fails and so the existence (or not) of minimizers is not clear.
For instance, in the constant exponent case, it is well known that extremals do not exists for any bounded open set $\Omega$.

There are some cases where the subcriticality is violated but still the immersion is compact. In fact, in \cite{MOSS}, it is proved that if the criticality set is ``small'' and we have a control on how the exponent $q$ reaches $p^*$ at the criticality set, then the immersion $W^{1,p(x)}_0(\Omega)\hookrightarrow L^{q(x)}(\Omega)$ is compact, and so the existence of extremals follows as in the subcritical case.

However, in the general case $\A\not=\emptyset$, up to our knowledge, there are no results regarding the existence or not of extremals for the Sobolev immersion Theorem. This paper is an attempt to fill this gap.

In order to state our main results, let us introduce some notation.
\begin{itemize}
\item The Rayleigh quotient will be denoted by
\begin{equation}\label{Rayleigh}
 Q_{p,q,\Omega}(v):=\frac{\|\nabla v\|_{L^{p(x)}(\Omega)}}{\| v\|_{L^{q(x)}(\Omega)}}.
\end{equation}

\item The Sobolev immersion constant by
\begin{equation}\label{constante 2}
 S(p(\cdot),q(\cdot),\Omega) = \inf_{v\in W^{1,p(x)}_0(\Omega)}Q_{p,q,\Omega}(v).
\end{equation}

\item The localized Sobolev constant by
\begin{equation}\label{constante 4}
\overline{S_x}=\sup_{\varepsilon>0} S(p(\cdot),q(\cdot),B_\varepsilon(x)) = \lim_{\varepsilon\to 0+}S(p(\cdot),q(\cdot),B_\varepsilon(x)),\qquad x\in\Omega.
\end{equation}

\item The critical constant by
\begin{equation}
\overline{S}=\inf_{x\in\A}\overline{S_x}.
\end{equation}

\item The usual Sobolev constant for constant exponents
\begin{equation}\label{constante 3}
K^{-1}_{r}=\inf_{v\in C^{\infty}_c(\R^n)}\frac{\|\nabla v\|_{L^{r}(\R^N)}}{\|v\|_{L^{r^*}(\R^N)}}.
\end{equation}
\end{itemize}

With these notations, our main results can be stated as
\begin{teo}\label{estimacion}
Assume that $p(\cdot), q(\cdot)\colon \Omega\to \R$ are continuous functions with modulus of continuity $\rho(t)$ such that
$$
\rho(t)\log(1/t)\to 0\quad \mbox{as } t\to 0+.
$$
Assume, moreover, that the criticality set $\A$ is nonempty and $\sup_{\Omega}p(\cdot)\le \inf_{\Omega}q(\cdot)$.

Then, for every domain $\Omega$ it holds
$$
S(p(\cdot),q(\cdot),\Omega) \le \overline S \le \inf_{p^-_{\A}\le r\le p^+_{\A}} K^{-1}_r,
$$
where $p^-_{\A}:=\inf_{\A}p(\cdot)$ and $p^+_{\A}:=\sup_{\A}p(\cdot)$.
\end{teo}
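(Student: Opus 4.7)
The bound $S(p(\cdot),q(\cdot),\Omega)\le\overline S$ is immediate from domain monotonicity: for $x_0\in\A$ and $\varepsilon>0$ with $B_\varepsilon(x_0)\subset\Omega$, extension by zero sends $W^{1,p(\cdot)}_0(B_\varepsilon(x_0))$ into $W^{1,p(\cdot)}_0(\Omega)$ isometrically in both the $L^{p(\cdot)}$ norm of the gradient and the $L^{q(\cdot)}$ norm; hence $S(p(\cdot),q(\cdot),\Omega)\le S(p(\cdot),q(\cdot),B_\varepsilon(x_0))$, and taking $\varepsilon\to 0$ and then $\inf_{x_0\in\A}$ gives the claim.

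For the upper bound, I plan first to establish
\[
\overline{S_{x_0}}\le K^{-1}_{p(x_0)}\qquad\text{for every }x_0\in\A,
\]
and then to deduce the stated inequality by continuity of $r\mapsto K^{-1}_r$. Fix $x_0\in\A$, set $p_0=p(x_0)$, so that $q(x_0)=p_0^*$. Given $\delta>0$, choose $U\in C^\infty_c(\R^N)$ with $\|\nabla U\|_{L^{p_0}(\R^N)}/\|U\|_{L^{p_0^*}(\R^N)}\le K^{-1}_{p_0}+\delta$, and for $\lambda\in(0,1)$ test with the critical-Sobolev rescaling
\[
U_\lambda(x)=\lambda^{-(N-p_0)/p_0}\,U\bigl((x-x_0)/\lambda\bigr),
\]
which is supported in $B_\varepsilon(x_0)$ for $\lambda$ small. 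For the constant exponent $p_0$ both $\|\nabla U_\lambda\|_{L^{p_0}(\R^N)}$ and $\|U_\lambda\|_{L^{p_0^*}(\R^N)}$ are scale-invariant; the goal is to recover the analogous invariance in the limit for the variable-exponent Luxemburg norms $\|\cdot\|_{L^{p(\cdot)}(\Omega)}$ and $\|\cdot\|_{L^{q(\cdot)}(\Omega)}$.

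This Luxemburg-norm limit is the main obstacle, and it is precisely where the log-modulus hypothesis enters. After the change of variable $y=(x-x_0)/\lambda$, the modular of $\nabla U_\lambda/c$ takes the form
\[
\int_{\R^N}\lambda^{N(1-p(x_0+\lambda y)/p_0)}\,\bigl|\nabla U(y)/c\bigr|^{p(x_0+\lambda y)}\,dy,
\]
and writing $\lambda^{N(1-p(x_0+\lambda y)/p_0)}=\exp\bigl(N\log\lambda\cdot(1-p(x_0+\lambda y)/p_0)\bigr)$ together with the bound $|1-p(x_0+\lambda y)/p_0|\le p_0^{-1}\rho(\lambda|y|)$ and the hypothesis $\rho(t)\log(1/t)\to 0$ forces this factor to $1$ uniformly on the compact support of $U$. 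Dominated convergence then gives the modular limit $c^{-p_0}\int_{\R^N}|\nabla U|^{p_0}$, so the choice $c=\|\nabla U\|_{L^{p_0}(\R^N)}$ makes this limit equal $1$; a standard monotonicity argument on the modular in the parameter $c$ upgrades this to $\|\nabla U_\lambda\|_{L^{p(\cdot)}(\Omega)}\to\|\nabla U\|_{L^{p_0}(\R^N)}$. The completely analogous computation for $U_\lambda$, using $q(x_0)=p_0^*$ and the modulus of continuity of $q$, yields $\|U_\lambda\|_{L^{q(\cdot)}(\Omega)}\to\|U\|_{L^{p_0^*}(\R^N)}$.

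Combining the two limits, $\limsup_{\lambda\to 0}Q_{p,q,\Omega}(U_\lambda)\le K^{-1}_{p_0}+\delta$; sending $\delta\to 0$ and $\varepsilon\to 0$ produces the local bound $\overline{S_{x_0}}\le K^{-1}_{p(x_0)}$, and hence $\overline S\le\inf_{x_0\in\A}K^{-1}_{p(x_0)}$. To pass from this to the claimed infimum over $r\in[p^-_\A,p^+_\A]$, I would invoke the continuity of $r\mapsto K^{-1}_r$ together with the fact that $p(\A)$ is dense in $[p^-_\A,p^+_\A]$ (produced by selecting sequences in $\A$ realizing $p^\pm_\A$ and applying the intermediate value theorem along suitable paths in $\A$): any such $r$ is then the limit of some $p(x_n)$ with $x_n\in\A$, so that $\overline S\le\liminf_n K^{-1}_{p(x_n)}=K^{-1}_r$, which finishes the proof.
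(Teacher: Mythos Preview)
Your approach mirrors the paper's exactly: domain monotonicity gives the first inequality, and the rescaling argument with $U_\lambda$ together with the log-H\"older hypothesis to pass to the limit in the modulars is precisely the content of the paper's Lemma~4.1, yielding $\overline{S_{x_0}}\le K^{-1}_{p(x_0)}$ for each $x_0\in\A$. One caveat on your final step: the claim that $p(\A)$ is dense in $[p^-_\A,p^+_\A]$ via the intermediate value theorem ``along suitable paths in $\A$'' tacitly assumes $\A$ is path-connected, which is not among the hypotheses; the paper itself simply passes from $\inf_{x_0\in\A}K^{-1}_{p(x_0)}$ to $\inf_{p^-_\A\le r\le p^+_\A}K^{-1}_r$ without comment, so this point is left unaddressed there as well.
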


\begin{teo}\label{existencia}
Under the same assumptions of the previous Theorem, if  the strict inequality holds
$$
S(p(\cdot),q(\cdot),\Omega) < \overline S,
$$
then there exists an extremal for the immersion $W^{1,p(x)}_0(\Omega)\hookrightarrow L^{q(x)}(\Omega)$.
\end{teo}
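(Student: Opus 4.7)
The plan is a direct-method argument combined with the refined variable-exponent Concentration--Compactness Principle developed earlier in the paper. Take a minimizing sequence $\{u_n\}\subset W^{1,p(x)}_0(\Omega)$ normalized so that $\|u_n\|_{L^{q(x)}(\Omega)}=1$ and $\|\nabla u_n\|_{L^{p(x)}(\Omega)}\to S:=S(p(\cdot),q(\cdot),\Omega)$. Since $W^{1,p(x)}_0(\Omega)$ is reflexive and the sequence is bounded, up to a subsequence $u_n\rightharpoonup u$ weakly in $W^{1,p(x)}_0$, pointwise a.e., and strongly in $L^{p(x)}(\Omega)$ (the latter by compactness of the subcritical embedding). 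The aim is to show $\|u\|_{L^{q(x)}(\Omega)}=1$; combined with weak lower semicontinuity of $\|\nabla\cdot\|_{L^{p(x)}}$, this makes $u$ an extremal.

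To control the possible loss of mass, I apply the variable-exponent Concentration--Compactness Principle to $\{u_n\}$. It yields an at most countable set $\{x_i\}\subset\A$ and nonnegative numbers $\nu_i,\mu_i$ such that
\begin{equation*}
|u_n|^{q(x)}\rightharpoonup |u|^{q(x)}+\sum_i\nu_i\delta_{x_i},\qquad |\nabla u_n|^{p(x)}\rightharpoonup \mu \geq |\nabla u|^{p(x)}+\sum_i\mu_i\delta_{x_i},
\end{equation*}
in the sense of measures, together with the atomic reverse-Sobolev bound $\overline{S_{x_i}}\,\nu_i^{1/q(x_i)}\leq \mu_i^{1/p(x_i)}$.

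The heart of the argument is to show $\nu_i=0$ for every $i$. Assume by contradiction that some $\nu_j>0$. Following the Lions--Pacella--Tricarico convexity strategy, I translate the constraint $\|u_n\|_{L^{q(x)}(\Omega)}=1$ via the Luxemburg--modular relationship into an identity of the form $\int_\Omega|u|^{q(x)}\,dx+\sum_i\nu_i=1$, combine it with the gradient decomposition and weak lower semicontinuity, and estimate $S=\lim\|\nabla u_n\|_{L^{p(x)}(\Omega)}$ from below. The hypothesis $\sup_\Omega p\leq\inf_\Omega q$ places the relevant power-functions on the correct side of convexity, so that the atomic contribution $\overline{S_{x_j}}\,\nu_j^{1/q(x_j)}$ can be isolated, yielding $S\geq \overline{S_{x_j}}\geq\overline S$. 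This contradicts the strict inequality $S<\overline S$, forcing $\nu_j=0$. Hence $\int_\Omega|u_n|^{q(x)}\,dx\to\int_\Omega|u|^{q(x)}\,dx=1$, and the modular--norm relation in $L^{q(x)}$ gives $\|u\|_{L^{q(x)}(\Omega)}=1$, completing the proof.

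The main difficulty I anticipate is executing the Lions--Pacella--Tricarico convexity step in variable exponent spaces. In the constant exponent case the Luxemburg norm is a plain power of the modular, so convexity of $t\mapsto t^r$ converts directly into a norm inequality; here, with $p(x_i)$ and $q(x_i)$ varying from atom to atom and the Luxemburg norm only implicitly defined, the manipulation has to be carried out at the modular level and then translated back to the norm, with extra bookkeeping when the exponents differ between atoms. The hypothesis $p^+\leq q^-$ is precisely what makes this work, by securing the required monotonicity uniformly across all points of $\A$; everything else (reflexivity, compactness of subcritical embeddings, Brezis--Lieb-type identities, and the concentration--compactness decomposition) is available as a black box from the preceding sections.
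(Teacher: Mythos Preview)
Your proposal is correct and follows essentially the same route as the paper: normalize a minimizing sequence, apply the refined Concentration--Compactness Theorem, and use the Lions--Pacella--Tricarico convexity argument (driven by $p^+<q^-$) to rule out concentration under the hypothesis $S<\overline S$. The only organizational difference is that the paper first packages the convexity step as a separate dichotomy theorem (either $u_n\to u$ strongly in $L^{q(x)}$, or $u=0$ and the sequence concentrates at a \emph{single} point $x_0\in\A$ with $\nu_0=1$, $\mu_0=\overline{S}_{x_0}^{\,p(x_0)}$), and only afterwards, in the concentration branch, passes from the modular identity back to the Luxemburg norm via $\int_\Omega\big(|\nabla u_n|/(\overline{S}_{x_0}-\varepsilon)\big)^{p(x)}\,dx\to \overline{S}_{x_0}^{\,p(x_0)}/(\overline{S}_{x_0}-\varepsilon)^{p(x_0)}>1$ to obtain $S\ge\overline{S}_{x_0}$; you fold both steps into a single contradiction argument, which is fine, but be aware that the convexity inequality does not directly give $S\ge\overline{S_{x_j}}$ from $\nu_j>0$---it first forces $\nu_j=1$ and $u=0$, and the norm bound then requires the separate modular-to-norm computation you allude to in your last paragraph.
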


These two theorems give rise to two natural questions:
\begin{enumerate}
\item Is $\overline S = \inf_{p^-_{\A}\le r \le p^+_{\A}} K^{-1}_r$ or the inequality is strict?

\item For what domains $\Omega$ and exponents $p(x)$, $q(x)$ is the strict inequality $S(p(\cdot),q(\cdot),\Omega) < \overline S$ achieved?
\end{enumerate}

We give partial answer to these questions in this paper. For question (1) we show that
$\overline{S_x} = K^{-1}_{p(x)}$ for every point $x\in\A$ which is a local minimum of $p$ and a local maximum of $q$. As far as we know, it is an open problem to determine wether this inequality holds in general or not. For question (2), we show that the strict inequality is achieved for every domain $\Omega$ such that the subcriticality set $\Omega\setminus\A$ contains a sufficiently large ball. It will be interesting to know if there exists an example of the strict inequality in the case where $q(x)=p^*(x)$ in $\Omega$.

In the course of our study of question (1), we need to show that the constant $S(p(\cdot), q(\cdot), \Omega)$ is continuous with respect to $p(\cdot)$ and $q(\cdot)$ in the $L^\infty(\Omega)$ topology for monotone sequences. We believe that this result has independent interest.

\medskip

The proof of Theorem \ref{existencia} heavily relies on the Concentration--Compactness Theorem for variable exponents that was proved independently by \cite{FBS1} and \cite{Fu}. Moreover, what is needed here is a slight refinement of the version in \cite{FBS1}. Though this refinement follows as a simple observation in \cite{FBS1}, we make here a sketch of the full proof of the Concentration--Compactness Theorem in order to make the paper self contained.

The other key ingredient in the proof is the adaptation of a convexity argument due to P.L. Lions, F. Pacella and M. Tricarico \cite{LPT} in order to show that a minimizing sequence either concentrates at a single point or is strongly convergent.

Analogous results can be obtained for the trace embedding theorem by applying similar techniques. See \cite{FBSS2}.

\medskip

To end this introduction, let us comment on different applications where the $p(x)-$Laplacian has appeared.

Up to our knowledge there are two main fields where the $p(x)-$Laplacian has been proved to be extremely useful in applications:
\begin{itemize}
\item Image Processing
\item Electrorheological Fluids
\end{itemize}

For instance, in \cite{CLR}, Y. Chen, S. Levin and R. Rao proposed the following model in image processing
$$
E(u)=\int_{\Omega}\frac{|\nabla u(x)|^{p(x)}}{p(x)}+ f(|u(x)-I(x)|)\, dx \to \mbox{min}
$$
where $p(x)$ is a function varying between $1$ and $2$ and $f$ is a convex function. In their application, they chose $p(x)$ close to 1 where there is likely to be edges and close to 2 where it is unlikely to be edges.

The electrorheological fluids application is much more developed and we refer to the monograph by M. Ru{\v{z}}i{\v{c}}ka, \cite{Ru}, and its references. In these models, after some simplifications, it leads to solve
\begin{equation}\label{ecuacion}
\begin{cases}
-\Delta_{p(x)} u = f(x,u,\nabla u) & \mbox{in }\Omega\\
u = 0 & \mbox{on }\partial\Omega
\end{cases}
\end{equation}
for some nonlinear source $f$. In most cases, the source term is taken to be only dependent on $u$ and so in order for the usual variational techniques to work, one needs a control on the growth of $f$ given by the Sobolev embedding. In this regard there are plenty of literature that deal with this problem (just to cite a few, see \cite{Cabada-Pouso, Dinu, FZ, Mihailescu, Mihailescu-Radulescu}).
When the source term has critical growth in the sense of the Sobolev embedding, there are only a few results on the existence of solutions for \eqref{ecuacion}. We refer to the above mentioned works of \cite{FBS1, Fu, MOSS} and also the work \cite{Silva} where multiplicity results for \eqref{ecuacion} are obtained.

\medskip

\subsection*{Organization of the paper}
The rest of the paper is organized as follows. In Section 2, we collect some preliminaries on variable exponent spaces that will be used throughout the paper. In Section 3 we revisit the proof of the Concentration--Compactness Theorem in the version of \cite{FBS1} in order to make the necessary refinement. In Section 4 we prove our main results, Theorem \ref{estimacion} and Theorem \ref{existencia}. In Section 5 we prove the continuity of the Sobolev constant with respect to $p$ and $q$ in the $L^\infty$ topology. In Section 6 we give partial answer to question (1) and show that for $x$ a local minimum of $p$ and local maximum of $q$, $\overline{S_x} = K^{-1}_{p(x)}$. Finally,  in Section 7 we give partial answer to question (2) and show that if $\Omega\setminus\A$ contains a sufficiently large ball, then $S(p(\cdot), q(\cdot),\Omega) < \overline{S}$.

\section{Preliminaries on variable exponent Sobolev spaces}

In this section we review some preliminary results regarding Lebesgue and Sobolev spaces with variable exponent. All of these results and a comprehensive study of these spaces can be found in \cite{libro}.

\medskip

The variable exponent Lebesgue space $L^{p(x)}(\Omega)$ is defined by
$$
L^{p(x)}(\Omega) = \Big\{u\in L^1_{\text{loc}}(\Omega) \colon \int_\Omega|u(x)|^{p(x)}\,dx<\infty\Big\}.
$$
This space is endowed with the norm
$$
\|u\|_{L^{p(x)}(\Omega)}=\inf\Big\{\lambda>0:\int_\Omega\Big|\frac{u(x)}{\lambda}\Big|^{p(x)}\,dx\leq 1\Big\}
$$
The variable exponent Sobolev space $W^{1,p(x)}(\Omega)$ is defined by
$$
W^{1,p(x)}(\Omega) = \{u\in W^{1,1}_{\text{loc}}(\Omega) \colon u\in L^{p(x)}(\Omega) \mbox{ and } |\nabla u |\in  L^{p(x)}(\Omega)\}.
$$
The corresponding norm for this space is
$$
\|u\|_{W^{1,p(x)}(\Omega)}=\|u\|_{L^{p(x)}(\Omega)}+\| \nabla u \|_{L^{p(x)}(\Omega)}
$$
Define $W^{1,p(x)}_0(\Omega)$ as the closure of $C_c^\infty(\Omega)$ with respect to the $W^{1,p(x)}(\Omega)$ norm. The spaces $L^{p(x)}(\Omega)$, $W^{1,p(x)}(\Omega)$ and $W^{1,p(x)}_0(\Omega)$ are separable and reflexive Banach spaces when $1<\inf_\Omega p \le \sup_\Omega p <\infty$.

As usual, we denote the conjugate exponent of $p(x)$ by $p'(x) = p(x)/(p(x)-1)$  and the Sobolev exponent by
$$
p^*(x)=\begin{cases}
\frac{Np(x)}{N-p(x)} & \mbox{ if } p(x)<N\\
\infty & \mbox{ if } p(x)\geq N
\end{cases}
$$

The following result is proved in \cite{Fan} (see also \cite{libro}, pp. 79, Lemma 3.2.20 (3.2.23)).

\begin{prop}[H\"older-type inequality]\label{Holder}
Let $f\in L^{p(x)}(\Omega)$ and $g\in L^{q(x)}(\Omega)$. Then the following inequality holds
$$
\| f(x)g(x) \|_{L^{s(x)}(\Omega)}\le  \Big( \Big(\frac{s}{p}\Big)^+ + \Big(\frac{s}{q}\Big)^+\Big) \|f\|_{L^{p(x)}(\Omega)}\|g\|_{L^{q(x)}(\Omega)},
$$
where
$$
\frac{1}{s(x)} = \frac{1}{p(x)} + \frac{1}{q(x)}.
$$
\end{prop}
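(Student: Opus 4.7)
The plan is to reduce to normalized functions by scaling and then apply the pointwise Young inequality. First I would assume $\|f\|_{L^{p(x)}(\Omega)}$ and $\|g\|_{L^{q(x)}(\Omega)}$ are both strictly positive (otherwise the inequality is trivial) and normalize by setting $F=f/\|f\|_{L^{p(x)}(\Omega)}$ and $G=g/\|g\|_{L^{q(x)}(\Omega)}$. From the definition of the Luxemburg norm one gets the modular estimates $\int_\Omega |F|^{p(x)}\,dx\le 1$ and $\int_\Omega |G|^{q(x)}\,dx\le 1$.

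Next, I observe that $p(x)/s(x)$ and $q(x)/s(x)$ are conjugate exponents (this is precisely the relation $1/s=1/p+1/q$ rewritten), so the classical Young inequality applied pointwise gives
$$|F(x)G(x)|^{s(x)}\le \frac{s(x)}{p(x)}|F(x)|^{p(x)}+\frac{s(x)}{q(x)}|G(x)|^{q(x)}.$$
Integrating over $\Omega$ and bounding each coefficient by its supremum yields $\int_\Omega |FG|^{s(x)}\,dx\le \gamma$, where $\gamma:=\left(\tfrac{s}{p}\right)^+ + \left(\tfrac{s}{q}\right)^+$.

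Finally I would convert this modular bound into a norm bound. Setting $C:=\gamma\|f\|_{L^{p(x)}(\Omega)}\|g\|_{L^{q(x)}(\Omega)}$, the desired inequality $\|fg\|_{L^{s(x)}(\Omega)}\le C$ is equivalent to $\int_\Omega |fg/C|^{s(x)}\,dx\le 1$, which after unscaling reads $\int_\Omega |FG|^{s(x)}/\gamma^{s(x)}\,dx\le 1$. At this point I would invoke two cheap facts: $\gamma\ge 1$, because evaluating $\tfrac{s(x_0)}{p(x_0)}+\tfrac{s(x_0)}{q(x_0)}=1$ at any single point $x_0$ already forces $\gamma\ge 1$; and the standing assumption $s(x)\ge 1$. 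Together these give $\gamma^{s(x)}\ge \gamma$ pointwise, whence $\int_\Omega |FG|^{s(x)}/\gamma^{s(x)}\,dx\le \gamma^{-1}\int_\Omega |FG|^{s(x)}\,dx\le \gamma^{-1}\cdot\gamma=1$.

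I do not expect a serious obstacle; the one subtlety worth highlighting is the asymmetry between integrating the pointwise Young inequality, which naturally produces a multiplicative constant \emph{outside} the integral, and the Luxemburg norm, which demands a constant raised to the power $s(x)$ \emph{inside} the modular. The inequality $\gamma^{s(x)}\ge \gamma$ is what bridges the two, and it is the reason both $\gamma\ge 1$ and $s\ge 1$ enter the argument.
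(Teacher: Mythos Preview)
Your argument is correct. The paper itself does not supply a proof of this proposition; it simply records the inequality as a known fact and refers the reader to Fan--Zhao and to the monograph of Diening--Harjulehto--H\"ast\"o--R\r{u}\v{z}i\v{c}ka. What you have written is essentially the standard proof one finds in those references: normalize, apply Young's inequality pointwise with the conjugate pair $p(x)/s(x)$ and $q(x)/s(x)$, integrate, and then pass from the modular bound to the Luxemburg norm. The only point worth flagging is that you invoke $s(x)\ge 1$ as a ``standing assumption''; this is \emph{not} automatic from $p,q\ge 1$ alone (e.g.\ $p=q=3/2$ gives $s=3/4$), so strictly speaking it is an additional hypothesis on the exponents. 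In the paper's actual use of the proposition (Theorem~\ref{contS}) the auxiliary exponent $s_n$ is large, so the issue does not arise there, but if you want the statement in full generality you should either add $s^-\ge 1$ to the hypotheses or note that the argument goes through with $\gamma^{s(x)}\ge \gamma^{s^-}$ and absorb the resulting factor into the constant.
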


The Sobolev embedding Theorem is also proved in \cite{Fan}, Theorem 2.3.

\begin{prop}[Sobolev embedding]\label{embedding}
Let $p, q\in C(\overline{\Omega})$ be such that $1\leq q(x)\le p^*(x)$ for all $x\in\overline{\Omega}$. Then there is a continuous embedding
$$
W^{1,p(x)}(\Omega)\hookrightarrow L^{q(x)}(\Omega).
$$
Moreover, if $\inf_{\Omega} (p^*-q)>0$ then, the embedding is compact.
\end{prop}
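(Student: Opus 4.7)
The plan is to reduce the variable-exponent embedding to the classical constant-exponent Sobolev and Rellich--Kondrachov theory by localizing. Since $p,q$ are continuous on the compact set $\overline\Omega$, for every $\varepsilon>0$ there is a finite open cover of $\overline\Omega$ by balls $\{B_i\}_{i=1}^k$ on which the oscillations $p_i^+-p_i^-$ and $q_i^+-q_i^-$ are both less than $\varepsilon$, together with a subordinate smooth partition of unity $\{\eta_i\}$. Writing any $u\in W^{1,p(x)}(\Omega)$ as $u=\sum_i\eta_i u$ reduces the problem to bounding each piece $\eta_i u$ on the ball $B_i$, with $\|\eta_i u\|_{W^{1,p(x)}(B_i)}\le C\|u\|_{W^{1,p(x)}(\Omega)}$.

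For the continuous embedding, I first reduce to the critical case: on the bounded domain $\Omega$ the pointwise inequality $q\le p^*$ together with Proposition \ref{Holder} gives the elementary inclusion $L^{p^*(x)}(\Omega)\hookrightarrow L^{q(x)}(\Omega)$ with constant depending only on $|\Omega|$, so it suffices to prove $W^{1,p(x)}(\Omega)\hookrightarrow L^{p^*(x)}(\Omega)$. On a small ball $B_i$, the inclusion $L^{p(x)}(B_i)\subset L^{p_i^-}(B_i)$ (valid because $|B_i|<\infty$ and $p_i^-\le p(x)$) combined with the classical Sobolev inequality applied to $\eta_i u$ yields $\eta_i u\in L^{(p_i^-)^*}(B_i)$ with a bound in terms of $\|u\|_{W^{1,p(x)}(B_i)}$. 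To pass from $L^{(p_i^-)^*}$ to $L^{p^*(x)}$ on $B_i$ one compares modulars by splitting according to $\{|\eta_i u|\le 1\}$ and $\{|\eta_i u|>1\}$, using that $p^*(x)$ differs from $(p_i^-)^*$ by at most $O(\varepsilon)$ on $B_i$ and that $|B_i|<\infty$; this controls $\int_{B_i}|\eta_i u|^{p^*(x)}\,dx$. Summing over $i$ and converting back from modular to norm via the Luxemburg definition yields the global inequality. This critical step is the main obstacle: when $p$ oscillates, $(p_i^-)^*$ does not dominate $p^*(x)$ pointwise, so one cannot naively interpret a bound in $L^{(p_i^-)^*}$ as a bound in $L^{p^*(x)}$, and the modular splitting together with uniform continuity of $p$ on $\overline\Omega$ is precisely what makes the error term harmless.

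For the compact embedding, suppose $\inf_\Omega(p^*-q)>0$. Then, shrinking $\varepsilon$ and the balls if necessary, one can arrange $q_i^+\le (p_i^-)^*-\eta$ for a uniform $\eta>0$ on every $B_i$, since $q(x_0)\le p^*(x_0)-\delta_0$ together with small oscillation of $p$ and $q$ propagates to the ball. The classical Rellich--Kondrachov theorem provides a compact embedding $W^{1,p_i^-}(B_i)\hookrightarrow\hookrightarrow L^{q_i^+}(B_i)$, and because $q\le q_i^+$ on $B_i$ with $|B_i|<\infty$ one has the continuous inclusion $L^{q_i^+}(B_i)\hookrightarrow L^{q(x)}(B_i)$. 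Given a bounded sequence $\{u_n\}\subset W^{1,p(x)}(\Omega)$, apply this compactness to each $\eta_i u_n$ and extract by diagonalization across the finite cover a subsequence strongly convergent in $L^{q(x)}(B_i)$ for every $i$; piecing together with the partition of unity yields strong convergence in $L^{q(x)}(\Omega)$.
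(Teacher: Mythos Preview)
The paper does not prove this proposition; it merely records it and cites \cite{Fan}, Theorem~2.3, for the proof. There is therefore no argument in the paper to compare against.

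Your localization strategy via a partition of unity is the standard route and works cleanly for the compact (strictly subcritical) part: once the balls are small enough that $q_i^+<(p_i^-)^*$, the chain
\[
W^{1,p(x)}(B_i)\hookrightarrow W^{1,p_i^-}(B_i)\hookrightarrow\hookrightarrow L^{q_i^+}(B_i)\hookrightarrow L^{q(x)}(B_i)
\]
is valid and yields compactness after a finite diagonal extraction.

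The gap is in the critical continuous embedding. Since $t\mapsto t^*=Nt/(N-t)$ is increasing, on each ball one has $(p_i^-)^*\le p^*(x)$, so passing from $L^{(p_i^-)^*}(B_i)$ to $L^{p^*(x)}(B_i)$ means \emph{raising} the integrability exponent. Your modular splitting handles $\{|\eta_i u|\le 1\}$ trivially by the bound $|B_i|$, but on $\{|\eta_i u|>1\}$ one has $|\eta_i u|^{p^*(x)}\ge|\eta_i u|^{(p_i^-)^*}$, and a bound on $\int_{B_i}|\eta_i u|^{(p_i^-)^*}\,dx$ gives no control whatsoever on $\int_{B_i}|\eta_i u|^{p^*(x)}\,dx$, however small the exponent gap $p^*(x)-(p_i^-)^*=O(\varepsilon)$ may be: for any $\delta>0$ there are functions with bounded $L^r$ norm and arbitrarily large $L^{r+\delta}$ norm. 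The smallness of $\varepsilon$ does not purchase the missing integrability, and this is exactly why the variable-exponent critical embedding is delicate. The proofs in the literature (see, e.g., \cite{libro}, Chapter~8) impose log-H\"older continuity on $p$ and use maximal-function or extension techniques rather than a direct localization to constant exponents; your outline does not explain how to close this step, and as written it does not.
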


As in the constant exponent spaces, Poincar\'e inequality holds true (see \cite{libro}, pp. 249, Theorem 8.2.4)
\begin{prop}[Poincar\'e inequality]\label{Poincare}
Assume $p(x)$ is log-H\"older continuous. Then, there is a constant $C>0$, $C=C(\Omega)$, such that
$$
\|u\|_{L^{p(x)}(\Omega)}\leq C\|\nabla u\|_{W^{1,p(x)}(\Omega)},
$$
for all $u\in W^{1,p(x)}_0(\Omega)$.
\end{prop}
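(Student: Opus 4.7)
The plan is to reduce Poincar\'e's inequality to the boundedness of the Hardy--Littlewood maximal operator $M$ on $L^{p(x)}(\R^N)$, which is Diening's theorem and whose hypothesis, log-H\"older continuity of $p$, coincides precisely with the assumption of the proposition. Since $W^{1,p(x)}_0(\Omega)$ is by definition the closure of $C_c^\infty(\Omega)$ in the $W^{1,p(x)}$-norm, and both sides of the claimed inequality are continuous in that norm, a standard density argument reduces matters to proving the estimate for $u \in C_c^\infty(\Omega)$, which I then extend by zero to all of $\R^N$.

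For such $u$, I would first write down the classical pointwise Riesz representation
$$
|u(x)| \le C_N \int_{\R^N} \frac{|\nabla u(y)|}{|x-y|^{N-1}}\, dy,
$$
obtained by integrating $\partial_t u(x+t\omega)$ from $0$ to $\infty$ and averaging over the unit sphere. Because $u$ is supported in $\Omega$, the integrand vanishes outside $B(x, D)$ with $D = \operatorname{diam}(\Omega)$. A standard dyadic decomposition of $B(x,D)$ into annuli $\{2^{-k-1}D < |x-y|\le 2^{-k}D\}$, combined with the trivial estimate $|x-y|^{-(N-1)} \le C(2^{-k}D)^{-(N-1)}$ on each annulus and the summing of a geometric series, yields the pointwise bound
$$
|u(x)| \le C\,D\, M(|\nabla u|)(x).
$$

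It then suffices to invoke Diening's boundedness of $M$ on $L^{p(\cdot)}(\R^N)$ and the lattice property of the variable exponent norm to conclude
$$
\|u\|_{L^{p(x)}(\Omega)} \le C\,D\, \|M(|\nabla u|)\|_{L^{p(x)}(\R^N)} \le C(\Omega)\,\|\nabla u\|_{L^{p(x)}(\Omega)}.
$$
The main obstacle is this last step: the boundedness of $M$ on $L^{p(x)}$ is a nontrivial theorem, and the log-H\"older hypothesis is truly essential, as explicit examples of merely continuous $p$ for which $M$ fails to be bounded are known. Everything else is the classical real-variable derivation of Poincar\'e's inequality and does not genuinely reflect the variable exponent setting; one may view this proof as the constant-exponent argument with Diening's theorem plugged in at the one place that the exponent actually intervenes.
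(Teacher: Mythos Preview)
The paper does not supply its own proof of this proposition; it simply cites Theorem 8.2.4 in the book of Diening, Harjulehto, H\"ast\"o and R\r{u}\v{z}i\v{c}ka. So there is no ``paper's proof'' to compare against. Your sketch is one of the standard routes to the result and is essentially the argument found in that reference: the pointwise Riesz--potential bound $|u(x)|\le C_N\,I_1(|\nabla u|)(x)$, the Hedberg-type inequality $I_1 f(x)\le C\,D\,Mf(x)$ for compactly supported $f$, and then Diening's maximal theorem.

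Two small points are worth making explicit in a fully written-out version. First, Diening's theorem needs $p^->1$; this is implicit in the paper's standing assumptions but is not stated in the proposition, and your proof genuinely uses it, since $M$ is unbounded on $L^1$. Second, the maximal theorem is stated on $\R^N$ and requires, in addition to local log-H\"older continuity, the log-H\"older decay condition at infinity; you should note that a log-H\"older continuous exponent on a bounded $\Omega$ can be extended to a globally log-H\"older exponent on $\R^N$ (e.g.\ by a McShane-type extension and then truncation to a constant outside a large ball), so that Diening's theorem applies to the extended $|\nabla u|$. With these caveats spelled out, your argument is correct.
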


\begin{remark}
By Proposition \ref{Poincare}, we know that $\| \nabla u \|_{L^{p(x)}(\Omega)}$ and $\|u\|_{W^{1,p(x)}(\Omega)}$ are equivalent norms on $W_0^{1,p(x)}(\Omega)$.
\end{remark}

Throughout this paper the following notation will be used: Given $q\colon \Omega\to\R$ bounded, we denote
$$
q^+ := \sup_\Omega q(x), \qquad q^- := \inf_\Omega q(x).
$$

The following proposition is also proved in \cite{Fan} and it will be most usefull (see also \cite{libro}, Chapter 2, Section 1).
\begin{prop}\label{norma.y.rho}
Set $\rho(u):=\int_\Omega|u(x)|^{p(x)}\,dx$. For $u\in L^{p(x)}(\Omega)$ and $\{u_k\}_{k\in\N}\subset L^{p(x)}(\Omega)$, we have
\begin{align}
& u\neq 0 \Rightarrow \Big(\|u\|_{L^{p(x)}(\Omega)} = \lambda \Leftrightarrow \rho(\frac{u}{\lambda})=1\Big).\\
& \|u\|_{L^{p(x)}(\Omega)}<1 (=1; >1) \Leftrightarrow \rho(u)<1(=1;>1).\\
& \|u\|_{L^{p(x)}(\Omega)}>1 \Rightarrow \|u\|^{p^-}_{L^{p(x)}(\Omega)} \leq \rho(u) \leq \|u\|^{p^+}_{L^{p(x)}(\Omega)}.\\
& \|u\|_{L^{p(x)}(\Omega)}<1 \Rightarrow \|u\|^{p^+}_{L^{p(x)}(\Omega)} \leq \rho(u) \leq \|u\|^{p^-}_{L^{p(x)}(\Omega)}.\\
& \lim_{k\to\infty}\|u_k\|_{L^{p(x)}(\Omega)} = 0 \Leftrightarrow \lim_{k\to\infty}\rho(u_k)=0.\\
& \lim_{k\to\infty}\|u_k\|_{L^{p(x)}(\Omega)} = \infty \Leftrightarrow \lim_{k\to\infty}\rho(u_k) = \infty.
\end{align}
\end{prop}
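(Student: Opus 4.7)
The plan is to reduce every statement in the proposition to a study of the one-variable function
$$
\varphi_u(\lambda) := \rho\!\left(\tfrac{u}{\lambda}\right) = \int_\Omega \left|\tfrac{u(x)}{\lambda}\right|^{p(x)}\,dx, \qquad \lambda>0,
$$
for a fixed $u\in L^{p(x)}(\Omega)\setminus\{0\}$. First I would verify, using dominated convergence and the uniform bounds $1\le p^-\le p(x)\le p^+<\infty$, that $\varphi_u$ is continuous on $(0,\infty)$, strictly decreasing (the integrand is strictly decreasing in $\lambda$ pointwise wherever $u\neq 0$), tends to $0$ as $\lambda\to\infty$, and diverges to $\infty$ as $\lambda\to 0^+$ (by monotone convergence, since $|u|>0$ on a set of positive measure).

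Once this qualitative picture is established, statement (1) is essentially the definition of the Luxemburg norm: the infimum of $\lambda>0$ for which $\varphi_u(\lambda)\le 1$ is, by continuity and strict monotonicity, attained at the unique $\lambda_0$ with $\varphi_u(\lambda_0)=1$, so $\|u\|_{L^{p(x)}(\Omega)}=\lambda_0$. Statement (2) is then the case $\lambda=1$ of (1) combined with strict monotonicity: $\rho(u)<1$ means $\varphi_u(1)<1=\varphi_u(\|u\|)$, which forces $\|u\|<1$, and the other cases are analogous.

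The quantitative heart is (3) and (4). Setting $\lambda:=\|u\|_{L^{p(x)}(\Omega)}$ so that $\rho(u/\lambda)=1$ by (1), I would factor
$$
\rho(u) = \int_\Omega \lambda^{p(x)} \left|\tfrac{u(x)}{\lambda}\right|^{p(x)}\,dx
$$
and split into $\lambda>1$ and $\lambda<1$. For $\lambda>1$ one has $\lambda^{p^-}\le\lambda^{p(x)}\le\lambda^{p^+}$ pointwise, and inserting these into the integrand (using $\rho(u/\lambda)=1$) yields $\lambda^{p^-}\le\rho(u)\le\lambda^{p^+}$; for $\lambda<1$ the inequalities on $\lambda^{p(x)}$ reverse, giving $\lambda^{p^+}\le\rho(u)\le\lambda^{p^-}$. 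Statements (5) and (6) follow by sandwiching: if $\|u_k\|\to 0$ then eventually $\|u_k\|<1$ and (4) gives $\rho(u_k)\le\|u_k\|^{p^-}\to 0$; conversely, $\rho(u_k)\to 0$ forces $\|u_k\|<1$ eventually by (2), whence $\|u_k\|^{p^+}\le\rho(u_k)\to 0$ gives $\|u_k\|\to 0$. Statement (6) is symmetric, using (3).

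I do not expect a genuine obstacle here: the proposition is a careful unpacking of the Luxemburg-norm definition exploiting the homogeneity of $t\mapsto t^{p(x)}$ in $t$ and the uniform bounds $1\le p^-\le p^+<\infty$. The only point deserving a moment of care is the continuity of $\varphi_u$ together with its limits at $0$ and $\infty$, which is a routine dominated/monotone convergence exercise; everything else is an algebraic manipulation on the identity $\rho(u/\lambda)=1$ at $\lambda=\|u\|_{L^{p(x)}(\Omega)}$.
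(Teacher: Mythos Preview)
Your argument is correct and is essentially the standard proof found in the references the paper cites. Note, however, that the paper does not itself prove this proposition: it is quoted as a preliminary result from \cite{Fan} (see also \cite{libro}, Chapter~2, Section~1), so there is no original proof in the paper to compare against. Your sketch via the auxiliary function $\varphi_u(\lambda)=\rho(u/\lambda)$, its strict monotonicity and continuity, and the pointwise bounds $\lambda^{p^-}\le\lambda^{p(x)}\le\lambda^{p^+}$ (reversed for $\lambda<1$) is exactly the argument given in those sources.
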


For much more on these spaces, we refer to \cite{libro}.

\section{Refinement of the Concentration--Compactness Theorem}

In this section we make a refinement of the Concentration--Compactness Theorem for variable exponent spaces that was proved independently by \cite{FBS1} and \cite{Fu}.

The refinement made here is essential in the remaining of the paper and it involves a precise computation of the constants. More precisely, we prove

\begin{teo}\label{CCT}
Let $\{u_n\}_{n\in\N}\subset W^{1,p(x)}_0(\Omega)$ be a sequence such that $u_n\rightharpoonup u$ weakly in $W^{1,p(x)}_0(\Omega)$. Then there exists a finite set $I$ and points $\{x_i\}_{i\in I}\subset \A$ such that
\begin{align}
\label{nu}& |u_n|^{q(x)} \rightharpoonup \nu = |u|^{q(x)} + \sum_{i\in I} \nu_i \delta_{x_i} \qquad \text{weakly in the sense of measures}\\
\label{mu}& |\nabla u_n|^{p(x)} \rightharpoonup \mu \geq |\nabla u|^{p(x)} + \sum_{i\in I} \mu_i \delta_{x_i} \qquad \text{weakly in the sense of measures}\\
\label{refinement}& \overline{S}_{x_i}\nu_i^{\frac{1}{q(x_i)}}\leq \mu_i^{\frac{1}{p(x_i)}}.
\end{align}
\end{teo}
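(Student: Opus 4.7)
The plan is to follow the classical concentration-compactness recipe of P.L.\ Lions, in the variable-exponent form of \cite{FBS1}, but with the constant in the reverse Sobolev inequality tracked carefully so that $\overline{S}_{x_i}$ (rather than a cruder universal constant) shows up in \eqref{refinement}. Since $\{u_n\}$ is bounded in $W^{1,p(x)}_0(\Omega)$, the modulars $|u_n|^{q(x)}$ and $|\nabla u_n|^{p(x)}$ are bounded in $L^1(\Omega)$, so after extracting a subsequence they converge in the weak-$*$ sense of Radon measures on $\overline\Omega$ to nonnegative measures $\nu$ and $\mu$. Lower semicontinuity of the $p(x)$-modular (which follows, for instance, from Mazur) gives $\mu\ge|\nabla u|^{p(x)}$, establishing \eqref{mu}. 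For \eqref{nu} one first notes $\nu\ge|u|^{q(x)}$ and then proves that the defect measures $\nu_0:=\nu-|u|^{q(x)}$ and $\mu_0:=\mu-|\nabla u|^{p(x)}$ satisfy a reverse Sobolev-type inequality; this is the main tool.

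The core step is the following localised reverse Sobolev inequality. Fix $x_0\in\overline\Omega$ and $\varphi\in C_c^\infty(B_\varepsilon(x_0))$. Applying the definition of $S(p(\cdot),q(\cdot),B_\varepsilon(x_0))$ to $\varphi(u_n-u)\in W^{1,p(x)}_0(B_\varepsilon(x_0))$ gives
\[
S(p(\cdot),q(\cdot),B_\varepsilon(x_0))\,\|\varphi(u_n-u)\|_{L^{q(x)}(\Omega)}\le \|\nabla(\varphi(u_n-u))\|_{L^{p(x)}(\Omega)}.
\]
Splitting the gradient by the product rule, the term $(u_n-u)\nabla\varphi$ vanishes in the limit because, by the subcritical compactness in Proposition~\ref{embedding}, $u_n\to u$ strongly in $L^{p(x)}_{\mathrm{loc}}(\Omega)$. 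Combining this with a Brezis--Lieb-type decomposition for the modulars, one can pass to the $n\to\infty$ limit and obtain
\[
S(p(\cdot),q(\cdot),B_\varepsilon(x_0))\,\|\varphi\|_{L^{q(x)}(d\nu_0)}\le \|\varphi\|_{L^{p(x)}(d\mu_0)}.
\]
A standard reverse-Hölder argument on pairs of Radon measures then forces $\nu_0$ to be purely atomic with at most countably many atoms $\{x_i\}$, each coinciding with an atom of $\mu_0$. Moreover, these atoms must lie in $\A$: at any $x_0\notin\A$ one has $q(x_0)<p^*(x_0)$ on a neighbourhood, so the local embedding is compact and no concentration can occur. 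Finiteness of $I$ follows from $\mu(\Omega)<\infty$ together with a uniform positive lower bound on $\mu_i$ at each atom (a consequence of the same reverse inequality).

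To extract \eqref{refinement} I would fix an atom $x_i$ and choose a cutoff $\varphi$ equal to $1$ on $B_{\varepsilon/2}(x_i)$ and supported in $B_\varepsilon(x_i)$. Evaluating the displayed reverse inequality on such $\varphi$ then letting $\varepsilon\to 0$ has two effects: the prefactor tends to $\overline{S}_{x_i}$ by the very definition \eqref{constante 4}, while, because $p$ and $q$ are continuous at $x_i$, the Luxemburg norms $\|\varphi\|_{L^{q(x)}(d\nu_0)}$ and $\|\varphi\|_{L^{p(x)}(d\mu_0)}$ asymptotically become the constant-exponent norms at $x_i$, yielding $\nu_i^{1/q(x_i)}$ and $\mu_i^{1/p(x_i)}$. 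This is where the log-Hölder-type modulus hypothesis $\rho(t)\log(1/t)\to 0$ is essential: it guarantees that on balls of radius $\varepsilon$ the modular-vs-norm discrepancy of Proposition~\ref{norma.y.rho} collapses to a multiplicative factor $1+o(1)$ as $\varepsilon\to 0$. The main obstacle is precisely this last passage, because in the variable-exponent setting the Luxemburg norm is not a pure power of the modular, and any sloppy handling introduces spurious multiplicative constants that would destroy the sharpness of $\overline{S}_{x_i}$. The refinement over \cite{FBS1} consists exactly in pinning down that this factor is $1+o(1)$ rather than some uncontrolled constant.
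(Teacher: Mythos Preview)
Your approach is essentially the paper's: both localize the Sobolev inequality to balls $B_\varepsilon(x_i)$, derive a reverse-H\"older inequality between $\nu$ and $\mu$, and then let $\varepsilon\to 0$ using Proposition~\ref{norma.y.rho} together with continuity of $p$ and $q$ so that the prefactor becomes $\overline{S}_{x_i}$ and the exponents collapse to $p(x_i)$, $q(x_i)$. Two corrections are in order.

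First, the paper reduces at the outset to the case $u=0$ (citing \cite{FBS1}) rather than working with the defect measure $\mu_0=\mu-|\nabla u|^{p(x)}$. Your displayed inequality with $\mu_0$ on the right is not cleanly justified: a Brezis--Lieb decomposition for $|\nabla(u_n-u)|^{p(x)}$ would require a.e.\ convergence of $\nabla u_n$, which you do not have. This is harmless for \eqref{refinement} because one may simply keep $\mu$ on the right (the measure $|\nabla u|^{p(x)}\,dx$ has no atoms, so $\mu$ and $\mu_0$ agree on $\{x_i\}$), and the reduction to $u=0$ sidesteps the issue entirely.

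Second, and more importantly, your claim that the log-H\"older hypothesis $\rho(t)\log(1/t)\to 0$ is ``essential'' for \eqref{refinement} is mistaken. Mere continuity of $p$ and $q$ already gives $p^\pm_{i,\varepsilon}\to p(x_i)$ and $q^\pm_{i,\varepsilon}\to q(x_i)$, and since $\nu_i$ and $\mu(B_\varepsilon(x_i))$ are fixed (bounded) numbers, this is all that is needed to pass from the $\min/\max$ bounds coming out of Proposition~\ref{norma.y.rho} to $\nu_i^{1/q(x_i)}$ and $\mu_i^{1/p(x_i)}$. This is exactly how the paper argues. The log-H\"older condition is used only later, in Lemma~\ref{desigualdad.primera}, where rescaled test functions $\phi_\lambda$ must be compared across scales and factors of the form $\lambda^{\rho(\lambda)}$ must tend to $1$.
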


\begin{remark}
The refinement that we present here is in inequality \eqref{refinement}.
\end{remark}

\begin{proof}

As in \cite{FBS1} it is enough to consider the case where $u_n\rightharpoonup 0$ weakly in $W^{1,p(x)}_0(\Omega)$.

\medskip

Now, consider $\phi\in C^\infty_c(\Omega)$, from Sobolev inequality for variable exponents, we obtain
\begin{equation}\label{poincare}
S(p(\cdot),q(\cdot),\Omega) \|\phi u_n\|_{L^{q(x)}(\Omega)} \leq \|\nabla(\phi u_n)\|_{L^{p(x)}(\Omega)}.
\end{equation}
In order to compute the right-hand side, we observe that
\begin{equation}\label{former}
| \|\nabla(\phi u_n)\|_{L^{p(x)}(\Omega)}-\|\phi\nabla u_n\|_{L^{p(x)}(\Omega)}|\le \| u_n \nabla \phi\|_{L^{p(x)}(\Omega)}.
\end{equation}
Then, we see that the right hand side of \eqref{former} converges to 0. In fact,
$$
\| u_n \nabla \phi\|_{L^{p(x)}(\Omega)}\le (\|\nabla \phi\|_{L^{\infty}(\Omega)}+1)^{p^+}
\| u_n \|_{L^{p(x)}(\Omega)} \to 0,
$$
as $W^{1,p(x)}_0(\Omega)$ is compactly embedded in  $L^{p(x)}(\Omega)$.

Therefore, taking $n\to \infty$ in \eqref{poincare}, we have,
\begin{equation}\label{RH}
S(p(\cdot),q(\cdot),\Omega) \|\phi\|_{L_\nu^{q(x)}(\Omega)} \leq\|\phi\|_{L_\mu^{p(x)}(\Omega)}.
\end{equation}
This is a reverse-H\"older type inequality for the measures $\mu$ and $\nu$. Now, as in \cite{FBS1} it follows that \eqref{nu} and \eqref{mu} hold.

\medskip

Again, exactly as in \cite{FBS1} it follows that the points $\{x_i\}_{i\in I}$ belong to the {\em critical set} $\A$.

\medskip

It remains to see \eqref{refinement}. 

Let $\phi\in C^\infty_c(\R^N)$ be such that $0\leq\phi\leq1$, $\phi(0)=1$ and supp$(\phi)\subset B_1(0)$. Now, for each $i\in I$ and $\varepsilon>0$, we denote $\phi_{\varepsilon,i}(x):= \phi((x-x_i)/\varepsilon)$.

Since supp$(\phi_{\varepsilon,i} u_n)\subset B_{\varepsilon}(x_i)$, by \eqref{RH} with $\Omega=B_\varepsilon(x_i)$, we obtain
$$
S(p(\cdot),q(\cdot),B_\varepsilon(x_i)) \|\phi_{\varepsilon,i}\|_{L^{q(x)}_\nu(B_\varepsilon(x_i))} \leq \|\phi_{\varepsilon,i}\|_{L^{p(x)}_\mu(B_\varepsilon(x_i))}.
$$

By \eqref{nu}, we have
\begin{align*}
\rho_\nu(\phi_{i_0,\varepsilon}) &:= \int_{B_\varepsilon(x_{i_0})}|\phi_{i_0,\varepsilon}|^{q(x)}\,d\nu \\
&= \int_{B_\varepsilon(x_{i_0})} |\phi_{i_0,\varepsilon}|^{q(x)}|u|^{q(x)}\, dx + \sum_{i\in I} \nu_i\phi_{i_0,\varepsilon}(x_i)^{q(x_i)}\\
&\geq \nu_{i_0}.
\end{align*}

From now on, we will denote
\begin{align*}
q^+_{i,\varepsilon}:=\sup_{B_\varepsilon(x_i)}q(x),\qquad q^-_{i,\varepsilon}:=\inf_{B_\varepsilon(x_i)}q(x),\\
p^+_{i,\varepsilon}:=\sup_{B_\varepsilon(x_i)}p(x),\qquad p^-_{i,\varepsilon}:=\inf_{B_\varepsilon(x_i)}p(x).
\end{align*}

If $\rho_\nu(\phi_{i_0,\varepsilon})<1$ then
$$
 \|\phi_{i_0,\varepsilon}\|_{L^{q(x)}_\nu (B_\varepsilon(x_{i_0}))} \ge \rho_\nu(\phi_{i_0,\varepsilon})^{1/q^-_{i,\varepsilon}} \ge \nu_{i_0}^{1/q^-_{i,\varepsilon}}.
$$
Analogously, if $\rho_\nu(\phi_{i_0,\varepsilon})>1$ then
$$
\|\phi_{i_0,\varepsilon}\|_{L^{q(x)}_\nu(B_\varepsilon(x_{i_0}))} \ge \nu_{i_0}^{1/q^+_{i,\varepsilon}}.
$$
Therefore,
$$
\min\Big\{\nu_i^\frac{1}{q^+_{i,\varepsilon}}, \nu_i^\frac{1}{q^-_{i,\varepsilon}}\Big\} S(p(\cdot),q(\cdot),B_\varepsilon(x_i)) \leq \|\phi_{i,\varepsilon}\|_{L^{p(x)}_\mu(B_\varepsilon(x_i))} .
$$

On the other hand,
$$
\int_{B_\varepsilon(x_i)} |\phi_{i,\varepsilon}|^{p(x)}\,d\mu\leq \mu(B_{\varepsilon}(x_i))
$$
hence
\begin{align*}
 \|\phi_{i,\varepsilon}\|_{L^{p(x)}(B_\varepsilon(x_i))} &\leq  \max \Big\{ \rho_\mu (\phi_{i,\varepsilon})^\frac{1}{p^+_{i,\varepsilon}}, \rho_\mu(\phi_{i,\varepsilon})^\frac{1} {p^-_{i,\varepsilon}}\Big\}\\
&\le \max \Big\{ \mu(B_\varepsilon(x_i))^\frac{1}{p^+_{i,\varepsilon}}, \mu(B_\varepsilon(x_i))^\frac{1}{p^-_{i,\varepsilon}}\Big\},
\end{align*}
so we obtain,
$$
S(p(\cdot),q(\cdot),B_\varepsilon(x_i))\min \Big\{ \nu_i^\frac{1}{q^+_{i,\varepsilon}}, \nu_i^\frac{1}{q^-_{i,\varepsilon}}\Big\} \leq \max \Big\{ \mu(B_\varepsilon(x_i))^\frac{1}{p^+_{i,\varepsilon}}, \mu(B_\varepsilon(x_i))^\frac{1}{p^-_{i,\varepsilon}}\Big\}.
$$
As $p$ and $q$ are continuous functions and as $q(x_i) = p^*(x_i)$, letting $\varepsilon\to 0$, we get
$$
\Big(\lim_{\varepsilon\to 0} S(p(\cdot),q(\cdot),B_\varepsilon(x_i))\Big) \nu_i^{1/p^*(x_i)} \le \mu_i^{1/p(x_i)},
$$
where $\mu_i := \lim_{\varepsilon\to 0}\mu(B_\varepsilon(x_i))$.

The proof is now complete.
\end{proof}

\section{Proof of the main results}

We begin this section with the proof of Theorem \ref{estimacion}.

\subsection{Proof of Theorem \ref{estimacion}}

First we prove a uniform upper bound for $S(p(\cdot),q(\cdot),\Omega)$ depending only on $p^-_\A$ and $p^+_\A$.

\begin{lema}\label{desigualdad.primera}
With the assumptions of Theorem \ref{estimacion}, it holds that
$$
S(p(\cdot),q(\cdot),\Omega)\le \inf_{p^-_{\A}\le r\le p^+_{\A}} K^{-1}_r,
$$
where $K_r^{-1}$ is given in \eqref{constante 3}.
\end{lema}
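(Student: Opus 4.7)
The plan is the classical Talenti concentration argument: at each $x_0\in\A$, one tests the Rayleigh quotient $Q_{p,q,\Omega}$ against a rescaled near-extremal of the constant-exponent Sobolev inequality with exponent $r:=p(x_0)\in[p^-_\A,p^+_\A]$, and shows that the quotient tends to $K_r^{-1}$ as the concentration scale shrinks to zero.

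Concretely, fix $x_0\in\A$, so that $r:=p(x_0)<N$ and $q(x_0)=r^*$. Given $\eta>0$, pick $u\in C_c^\infty(\R^N)$ with $\|\nabla u\|_{L^r(\R^N)}/\|u\|_{L^{r^*}(\R^N)}\le K_r^{-1}+\eta$, and set
$$
u_\varepsilon(x):=\varepsilon^{-N/r^*}\,u\!\left(\tfrac{x-x_0}{\varepsilon}\right).
$$
This rescaling leaves $\|\nabla u_\varepsilon\|_{L^r(\R^N)}$ and $\|u_\varepsilon\|_{L^{r^*}(\R^N)}$ unchanged, and for $\varepsilon$ small enough $u_\varepsilon\in C_c^\infty(\Omega)\subset W^{1,p(\cdot)}_0(\Omega)$. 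The core task is to prove
$$
\|\nabla u_\varepsilon\|_{L^{p(\cdot)}(\Omega)}\to\|\nabla u\|_{L^r(\R^N)},\qquad \|u_\varepsilon\|_{L^{q(\cdot)}(\Omega)}\to\|u\|_{L^{r^*}(\R^N)}
$$
as $\varepsilon\to 0^+$, so that $Q_{p,q,\Omega}(u_\varepsilon)\to \|\nabla u\|_{L^r}/\|u\|_{L^{r^*}}\le K_r^{-1}+\eta$.

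The convergence of these Luxemburg norms is where the log-H\"older hypothesis $\rho(t)\log(1/t)\to 0$ is decisive. By Proposition~\ref{norma.y.rho} it is enough to check convergence of the associated modulars; after the substitution $y=(x-x_0)/\varepsilon$ one obtains
$$
\int_\Omega\bigl|u_\varepsilon(x)/\lambda\bigr|^{q(x)}\,dx=\int_{\mathrm{supp}(u)}\varepsilon^{-\frac{N}{q(x_0)}\bigl(q(x_0+\varepsilon y)-q(x_0)\bigr)}\,|u(y)/\lambda|^{q(x_0+\varepsilon y)}\,dy.
$$
Rewriting the $\varepsilon$-power as $\exp\!\bigl(-\tfrac{N}{q(x_0)}(q(x_0+\varepsilon y)-q(x_0))\log\varepsilon\bigr)$ and bounding $|q(x_0+\varepsilon y)-q(x_0)|\le\rho(C\varepsilon)$ uniformly on $\mathrm{supp}(u)$, the assumption $\rho(\varepsilon)\log(1/\varepsilon)\to 0$ forces this factor to $1$ uniformly, while $|u(y)/\lambda|^{q(x_0+\varepsilon y)}\to |u(y)/\lambda|^{r^*}$ pointwise under a uniform bound on $\mathrm{supp}(u)$. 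Dominated convergence yields the limit $\int|u/\lambda|^{r^*}\,dy$, hence the claimed convergence of $L^{q(\cdot)}$-norms via Proposition~\ref{norma.y.rho}. The gradient modular is handled by the same computation with $(\nabla u,p)$ in place of $(u,q)$, the Sobolev identity $r^*=Nr/(N-r)$ (equivalently $N/r^*+1=N/r$) providing the exact cancellation of the $\varepsilon$-powers.

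Combining these facts gives $S(p(\cdot),q(\cdot),\Omega)\le K_{p(x_0)}^{-1}+\eta$ for every $x_0\in\A$ and every $\eta>0$; letting $\eta\to 0$ and then taking the infimum over $x_0\in\A$, together with continuity of $r\mapsto K_r^{-1}$, delivers the stated upper bound in terms of $[p^-_\A,p^+_\A]$. The main technical obstacle is precisely the modular calculation: one must balance the singular prefactor $\varepsilon^{-N/r^*}$, the varying exponent, and the continuity modulus of $p$ and $q$, and it is exactly the condition $\rho(\varepsilon)\log(1/\varepsilon)\to 0$ that tips this balance in our favor.
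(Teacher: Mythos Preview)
Your proposal is correct and follows essentially the same approach as the paper's own proof: both test the Rayleigh quotient with a function rescaled around a point $x_0\in\A$ by the factor $\varepsilon^{-N/p^*(x_0)}$, change variables, and use the log-H\"older hypothesis $\rho(t)\log(1/t)\to 0$ to show that the variable-exponent Luxemburg norms converge to the constant-exponent norms at $x_0$, yielding $S(p(\cdot),q(\cdot),\Omega)\le K^{-1}_{p(x_0)}$. The only cosmetic differences are that you fix a near-extremal $u$ with slack $\eta$ and compute the modular at a generic level $\lambda$, whereas the paper works with an arbitrary $\phi\in C_c^\infty(\Omega)$, plugs the norm itself into the modular identity, and passes to the infimum over $\phi$ at the end.
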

\begin{proof}
First, we observe that our regularity assumptions on $p$ and $q$ implies that
\begin{align*}
& q(x_0+\lambda x) = q(x_0) + \rho_1(\lambda, x) = p^*(x_0) + \rho_1(\lambda, x),\\
& p(x_0+\lambda x) = p(x_0) + \rho_2(\lambda, x),
\end{align*}
with $\lim_{\lambda\to 0+}\lambda^{\rho_k(\lambda, x)} = 1$ uniformly in $\Omega$ ($k=1,2$).

Now, let $\phi\in C^\infty_c(\Omega)$, and define $\phi_\lambda$ to be the rescaled function around $x_0\in \A$ as $\phi_\lambda=\lambda^\frac{-n}{p^*(x_0)}\phi(\frac{x-x_0}{\lambda})$. Then we have
$$
1 = \int_\Omega \Big(\frac{\phi_\lambda}{\|\phi_\lambda\|_{L^{q(x)}(\Omega)}}\Big)^{q(x)}\,dx = \int_\Omega \lambda^{\frac{-N(p^*(x_0)+\rho_1(\lambda,x_0+\lambda y))}{p^*(x_0)}+N} \Big(\frac{\phi(y)}{{\|\phi_\lambda\|_{L^{q(x)}(\Omega)}}}\Big)^{q(x_0)+\rho_1(\lambda,x_0+\lambda y)}\,dy.
$$
Since
$$
\lambda^{\frac{-N \rho_1(\lambda,x_0+\lambda y)}{p^*(x_0)}}\Big(\frac{\phi(y)}{{\|\phi_\lambda\|_{L^{q(x)}(\Omega)}}}\Big)^{\rho_1(\lambda,x_0+\lambda y)}\to 1 \mbox{ when }\lambda\to 0+ \mbox{ in } \{|\phi|>0\}\subset  \Omega,
$$
we get
$$
1=\frac{\int_\Omega|\phi(y)|^{q(x_0)}\,dy}{\lim_{\lambda\to 0}\|\phi_\lambda\|_{L^{q(x)}(\Omega)}^{q(x_0)}}.
$$
Analogously,
\begin{align*}
1 &= \int_\Omega \Big(\frac{|\nabla\phi_\lambda|}{\|\nabla\phi_\lambda\|_{L^{p(x)}(\Omega)}}\Big)^{p(x)}\,dx \\
&= \int_\Omega \lambda^{\frac{-N(p(x_0)+\rho_2(\lambda,x_0+\lambda y))}{p^*(x_0)} + N} \Big(\frac{\frac{1}{\lambda} |\nabla\phi(y)|}{\|\nabla\phi_\lambda(y)\|_{L^{p(x)}(\Omega)}}\Big)^{p(x_0) + \rho_2(\lambda,x_0+\lambda y)}\,dx\\
&=\int_\Omega \lambda^{\frac{-N(p(x_0)+\rho_2(\lambda,x_0+\lambda y))}{p^*(x_0)}+N-p(x_0)-\rho_2(\lambda,x_0+\lambda y)} \Big(\frac{|\nabla\phi(y)|}{\|\nabla\phi_\lambda(y)\|_{L^{p(x)}(\Omega)}}\Big)^{p(x_0)+\rho_2(\lambda,x_0+\lambda y)}\,dx.
\end{align*}
Again,
$$
\lambda^{\frac{-N\rho_2(\lambda,x_0+\lambda y)}{p^*(x_0)}-\rho_2(\lambda,x_0+\lambda y)}\Big(\frac{|\nabla\phi(y)|}{\|\nabla\phi_\lambda(y)\|_{L^{p(x)}(\Omega)}}\Big)^{\rho_2(\lambda,x_0+\lambda y)}\to 1\mbox{ when }\lambda\to 0+ \mbox{ in } \{|\nabla \phi|>0\}\subset \Omega,
$$
so we arrive at
$$
1=\frac{\int_\Omega|\nabla\phi(y)|^{p(x_0)}\,dy}{\lim_{\lambda\to 0+}\|\nabla\phi_\lambda\|_{L^{p(x)}(\Omega)}^{p(x_0)}}.
$$

\medskip

Now, by definition of $S(p(\cdot),q(\cdot),\Omega)$,
$$
S(p(\cdot),q(\cdot),\Omega)\leq\frac{\|\nabla\phi_\lambda\|_{L^{p(x)}(\Omega)}}{\|\phi_\lambda\|_{L^{q(x)}(\Omega)}}
$$
and taking limit $\lambda\to 0+$, we obtain
$$
S(p(\cdot),q(\cdot),\Omega)\leq\frac{\|\nabla\phi\|_{L^{p(x_0)}(\Omega)}}{\|\phi\|_{L^{q(x_0)}(\Omega)}}
$$
for every $\phi\in C^\infty_c(\Omega)$. Then,
$$
S(p(\cdot),q(\cdot),\Omega)\leq K^{-1}_{p(x_0)},
$$
so,
$$
S(p(\cdot),q(\cdot),\Omega)\leq \inf_{p^-_\A\le r\le p^+_\A} K^{-1}_r
$$
as we wanted to show.
\end{proof}

Now, the proof of Theorem \ref{estimacion} follows easily as a simple corollary of Lemma \ref{desigualdad.primera}.

\begin{proof}[Proof of Theorem \ref{estimacion}]
Applying Lemma \ref{desigualdad.primera} to the case $\Omega=B_\varepsilon(x_0)$ for $x_0\in\A$ we get that
$$
S(p(\cdot),q(\cdot),B_\varepsilon(x_0))\le K^{-1}_{p(x_0)}
$$
for every $\varepsilon>0$. So
$$
\bar S_{x_0}\le K^{-1}_{p(x_0)}.
$$
Now, for the first inequality, we just observe that the Sobolev constant is nondecreasing with respect to inclusion, so
$$
S(p(\cdot),q(\cdot),\Omega)\le S(p(\cdot),q(\cdot),B_\varepsilon(x_0))
$$
for every ball $B_\varepsilon(x_0)\subset\Omega$.

So the result follows.
\end{proof}

\subsection{Proof of Theorem \ref{existencia}}

Now we focus on our second theorem. We begin by adapting a convexity argument used in \cite{LPT} to the variable exponent case.

\begin{teo}
Assume that $p^+< q^-$. Let $\{u_n\}_{n\in N}$ be a minimizing sequence for \eqref{constante 2}. Then the following alternative holds
\begin{itemize}
\item $\{u_n\}_{n\in \N}$ has a strongly convergence subsequence in $L^{q(x)}(\Omega)$ or

\item $\{u_n\}_{n\in\N}$ has a subsequence such that $|u_n|^{q(x)}\rightharpoonup \delta_{x_0}$ weakly in the sense of measures and $|\nabla u_n|^{p(x)}\rightharpoonup \overline{S}_{x_0}^{ p(x_0)}\delta_{x_0}$ weakly in the sense of measures, for some $x_0\in \A$.
\end{itemize}
\end{teo}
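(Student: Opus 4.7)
The plan is to adapt the Lions--Pacella--Tricarico convexity argument, using Proposition~\ref{norma.y.rho} to bridge the modular--norm gap caused by the variable exponents. By homogeneity I normalize so that $\|u_n\|_{L^{q(x)}(\Omega)}=\rho_q(u_n)=1$, hence $\|\nabla u_n\|_{L^{p(x)}(\Omega)}\to S:=S(p(\cdot),q(\cdot),\Omega)$. I extract a weakly convergent subsequence $u_n\rightharpoonup u$ in $W^{1,p(x)}_0(\Omega)$ and apply Theorem~\ref{CCT} to obtain atoms $\{x_i\}_{i\in I}\subset\A$ with
\[\nu=|u|^{q(x)}+\sum_{i\in I}\nu_i\delta_{x_i},\qquad \mu\geq|\nabla u|^{p(x)}+\sum_{i\in I}\mu_i\delta_{x_i},\qquad \overline{S}_{x_i}\nu_i^{1/q(x_i)}\leq\mu_i^{1/p(x_i)}.\]
Writing $a_0:=\int_\Omega|u|^{q(x)}\,dx$ and $b_i:=\nu_i$, testing $\nu$ against $1$ on the compact set $\overline\Omega$ gives the conservation identity $a_0+\sum_{i\in I}b_i=\nu(\overline\Omega)=\lim_n\rho_q(u_n)=1$.

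The key new step is a modular identity on the gradient side. Since $\|\nabla u_n/S\|_{L^{p(x)}(\Omega)}\to 1$, Proposition~\ref{norma.y.rho} forces $\rho_p(\nabla u_n/S)\to 1$, and weak-$*$ convergence of $|\nabla u_n|^{p(x)}$ tested against the continuous function $S^{-p(x)}$ on $\overline\Omega$ yields $\int_{\overline\Omega}S^{-p(x)}\,d\mu=1$. I then split this integral using the atomic decomposition of $\mu$, bound each atomic term via the refinement combined with $\overline{S}_{x_i}\geq\overline{S}\geq S$ (Theorem~\ref{estimacion}), and control the absolutely continuous part using the Sobolev inequality $\|\nabla u\|_{L^{p(x)}}\geq S\|u\|_{L^{q(x)}}$ together with Proposition~\ref{norma.y.rho} (noting that $a_0\leq 1$ forces $\|u\|_{L^{q(x)}}\leq 1$). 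This leads to
\[a_0^{p^+/q^-}+\sum_{i\in I}b_i^{p(x_i)/q(x_i)}\leq 1=a_0+\sum_{i\in I}b_i.\]
The hypothesis $p^+<q^-$ makes every exponent on the left strictly below $1$; since $t\in[0,1]$ satisfies $t^\alpha\geq t$ with equality iff $t\in\{0,1\}$, the chain collapses to equality. Hence each $a_0,b_i$ lies in $\{0,1\}$, and exactly one of them equals $1$.

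This produces the dichotomy. If $a_0=1$ and every $b_i=0$, then $\rho_q(u_n)\to\rho_q(u)=1$; together with $u_n\to u$ a.e.\ (from the compact embedding $W^{1,p(x)}_0\hookrightarrow L^{p(x)}$ plus a diagonal argument) and the convex bound $|u_n-u|^{q(x)}\leq C(|u_n|^{q(x)}+|u|^{q(x)})$, a Brezis--Lieb/Vitali argument yields $\rho_q(u_n-u)\to 0$, hence strong convergence in $L^{q(x)}(\Omega)$. If instead $b_{i_0}=1$ for exactly one $i_0$ while $a_0=0$ and all other $b_i=0$, then $u\equiv 0$ and $\nu=\delta_{x_{i_0}}$; equality in the chain forces $\mu=\mu_{i_0}\delta_{x_{i_0}}$, the identity $\int S^{-p(x)}\,d\mu=1$ gives $\mu_{i_0}=S^{p(x_{i_0})}$, and this combined with the refinement specialized to $\nu_{i_0}=1$ and with $S\leq\overline{S}_{x_{i_0}}$ from Theorem~\ref{estimacion} forces $S=\overline{S}_{x_{i_0}}$ and hence $\mu_{i_0}=\overline{S}_{x_{i_0}}^{p(x_{i_0})}$, matching the stated conclusion.

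The main obstacle is producing the LPT-type inequality in the variable-exponent setting: modulars and norms are no longer related by a single power, so Proposition~\ref{norma.y.rho} has to be applied separately to $\nabla u/S$ (with a case split according to whether $\|\nabla u/S\|_{L^{p(x)}}$ is above or below $1$) and to relate $a_0$ to $\|u\|_{L^{q(x)}}$. The uniform gap $p^+<q^-$ is precisely what guarantees that the exponents $p^+/q^-$ and $p(x_i)/q(x_i)$ are strictly less than $1$, which is what makes the convexity dichotomy bite.
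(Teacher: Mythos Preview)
Your proposal is correct and follows essentially the same approach as the paper: normalize, apply the refined Concentration--Compactness Theorem, derive the inequality $a_0^{p^+/q^-}+\sum_i b_i^{p(x_i)/q(x_i)}\le 1$ alongside $a_0+\sum_i b_i=1$, and use the strict concavity $t\mapsto t^\alpha$ ($\alpha<1$) to force the dichotomy. The only notable differences are technical: for the strong-convergence alternative the paper invokes the uniform (hence strict) convexity of $L^{q(x)}(\Omega)$ to pass from $\|u_n\|_{L^{q(x)}}=\|u\|_{L^{q(x)}}=1$ and $u_n\rightharpoonup u$ to $u_n\to u$ (Radon--Riesz), whereas you argue via Brezis--Lieb; and in the concentration alternative you spell out in full how equality throughout the chain forces $\mu=\mu_{i_0}\delta_{x_{i_0}}$, $\mu_{i_0}=S^{p(x_{i_0})}$, and $S=\overline S_{x_{i_0}}$, which the paper simply asserts ``easily follows''.
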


\begin{proof}
Let $\{u_n\}_{n\in\N}$ be a normalized minimizing sequence, that is,
$$
S(p(\cdot),q(\cdot),\Omega) = \lim_{n\to\infty}\|\nabla u_n\|_{L^{p(x)}(\Omega)}
$$
and
$$
\|u_n\|_{L^{q(x)}(\Omega)}=1
$$
Since $\{u_n\}_{n\in\N}$ is bounded in $W^{1,p(x)}_0(\Omega)$, by the Concentration--Compactness Theorem (Theorem \ref{CCT}), we have that, for a subsequence that we still denote by $\{u_n\}_{n\in\N}$,
\begin{align*}
& |u_n|^{q(x)} \rightharpoonup \nu = |u|^{q(x)} + \sum_{i\in I} \nu_i \delta_{x_i},\quad \text{weakly in the sense of measures,}\\
&|\nabla u_n|^{p(x)} \rightharpoonup \mu \ge |\nabla u|^{p(x)} + \sum_{i\in I} \mu_i \delta_{x_i},\quad \text{weakly in the sense of measures,}
\end{align*}
where $u\in W^{1,p(x)}_0(\Omega)$, $I$ is a finite set, $x_i\in \A$ and $\overline{S}_{x_i}^{ -1} \mu_i^{1/p(x_i)}\ge \nu_i^{1/p^*(x_i)}$

Hence, using Theorem \ref{desigualdad.primera},
\begin{align*}
1&=\lim_{n\to\infty} \int_{\Omega}\frac{|\nabla u_n|^{p(x)}}{\|\nabla u_n\|_{L^{p(x)}(\Omega)}^{p(x)}}\,dx\\
&\geq \int_{\Omega} |S(p(\cdot),q(\cdot),\Omega)^{-1} \nabla u|^{p(x)}\, dx + \sum_{i\in I} S(p(\cdot),q(\cdot),\Omega)^{-p(x_i)}\mu_{i}\\
&\geq  \int_{\Omega} |S(p(\cdot),q(\cdot),\Omega)^{-1} \nabla u|^{p(x)}\, dx + \sum_{i\in I}\overline{S}_{x_i} ^{-p(x_i)}\mu_{i}\\
&\geq \min\{(S(p(\cdot),q(\cdot),\Omega)^{-1} \|\nabla u\|_{L^p(x)(\Omega)})^{p^+}, (S(p(\cdot),q(\cdot),\Omega)^{-1} \|\nabla u\|_{L^p(x)(\Omega)})^{p^-}\} + \sum_{i\in I} \nu_i^{\frac{p(x_i)}{p^*(x_i)}}\\
&\geq \min\{ \|u\|_{L^{q(x)}(\Omega)}^{p^+}, \|u\|_{L^{q(x)}(\Omega)}^{p^-}\} + \sum_{i\in I} \nu_i^{\frac{p(x_i)}{p^*(x_i)}}
\end{align*}
where in the last inequality we have used the definition of $S$ \eqref{constante 2}.

Now, as $\|u_n\|_{L^{q(x)}(\Omega)}=1$ and $u_n\rightharpoonup u$ weakly in $L^{q(x)}(\Omega)$, it follows that $\|u\|_{L^{q(x)}(\Omega)}\le 1$, hence
$$
\min\{ \|u\|_{L^{q(x)}(\Omega)}^{p^+}, \|u\|_{L^{q(x)}(\Omega)}^{p^-}\}
 = \|u\|_{L^{q(x)}(\Omega)}^{p^+}\geq \rho_{q}(u)^{\frac{p^+}{q^-}}.
$$
So we find that
\begin{equation}\label{grad.1}
\rho_{q}(u)^{\frac{p^+}{q^-}} + \sum_{i\in I} \nu_i^{\frac{p(x_i)}{p^*(x_i)}} \leq 1.
\end{equation}

On the other hand, as $u_n$ is normalized, we get that
\begin{equation}\label{grad.2}
1 =  \rho_{q}(u) + \sum_{i\in I} \nu_i.
\end{equation}
Since $p^+< q^-$, by \eqref{grad.1} and \eqref{grad.2}, we can conclude that either $\rho_q(u)=1$ and the set $I$ is empty, or $u=0$ and the set $I$ contains a single point.

If the first case occurs, then $1 = \|u_n\|_{L^{q(x)}(\Omega)} = \rho_q(u_n) = \rho_q(u) = \|u\|_{L^{q(x)}(\Omega)}$ and, as $L^{q(x)}(\Omega)$ is a strictly convex Banach space, it follows that $u_n\to u$ strongly in $L^{q(x)}(\Omega)$.

If the second case occurs it easily follows that $\nu_0=1$ and $\mu_0=\overline{S}_{x_0}^{ p(x_0)}$.
\end{proof}

With the aid of this result, we are now ready to prove Theorem \ref{existencia}.

\begin{proof}[Proof of Theorem \ref{existencia}]
Let $\{u_n\}_{n\in\N}$ be a minimizing sequence for \eqref{constante 2}.

If $\{u_n\}_{n\in\N}$ has a strongly convergence subsequence in $L^{q(x)}(\Omega)$, then the result holds.

Assume that this is not the case. Then, by the previous Theorem, there exists $x_0\in\A$ such that
$|u_n|^{q(x)}\rightharpoonup \delta_{x_0}$ weakly in the sense of measures and $|\nabla u_n|^{p(x)}\rightharpoonup \overline{S}_{x_0}^{ p(x_0)}\delta_{x_0}$ weakly in the sense of measures

So, for $\varepsilon>0$, we have
$$
\int_\Omega\Big( \frac{|\nabla u_n|}{\overline{S}_{x_0}-\varepsilon}\Big)^{p(x)}\,dx\to\frac{\overline{S}_{x_0}^{ p(x_0)}}{\Big(\overline{S}_{x_0}-\varepsilon\Big)^{p(x_0)}}>1
$$
Then, there exists $n_0$ such that for all $n\geq n_0$, we know that:
$$
\|\nabla u_n\|_{L^{p(x)}(\Omega)}>\overline{S}_{x_0}-\varepsilon
$$
Taking limit, we obtain
$$
S(p(\cdot),q(\cdot),\Omega) \geq \overline{S}_{x_0}-\varepsilon
$$

As $\varepsilon>0$ is arbitrary, the result follows.
\end{proof}

\section{Continuity of the Sobolev constant with respect to $p$ and $q$}
In this section, we prove the continuity of the Sobolev constant $S(p(\cdot), q(\cdot), \Omega)$ with respect to $p$ and $q$ in the $L^\infty(\Omega)$ topology for monotone sequences.

We first prove an easy Lemma on the continuity of the Rayleigh quotient.
\begin{lema}\label{contQ}
Let $p_n\to p$ and $q_n\to q$ in $L^\infty(\Omega)$. Then, for every $v\in C^\infty_c(\Omega)$, $Q_{p_n,q_n,\Omega}(v)\to Q_{p,q,\Omega}(v)$.
\end{lema}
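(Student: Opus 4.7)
The plan is to reduce the lemma to a general continuity statement for variable exponent norms: if $w\in L^\infty(\Omega)$ is compactly supported in $\Omega$ and $r_n\to r$ in $L^\infty(\Omega)$ with $r_n^-\ge 1$, then $\|w\|_{L^{r_n(x)}(\Omega)}\to \|w\|_{L^{r(x)}(\Omega)}$. Applying this twice---once to the numerator with $w=|\nabla v|$ and $r_n=p_n$, once to the denominator with $w=v$ and $r_n=q_n$---yields convergence of both norms. Since $v\in C^\infty_c(\Omega)$ may be assumed nontrivial (otherwise the statement is trivial), the limit of the denominator is strictly positive, so the quotient passes to the limit.

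To prove the reduction I would work through the modular $\rho_r(\lambda):=\int_\Omega |w(x)/\lambda|^{r(x)}\,dx$. By Proposition \ref{norma.y.rho}, whenever $w\not\equiv 0$ the norm $\|w\|_{L^{r(x)}(\Omega)}$ equals the unique $\lambda>0$ solving $\rho_r(\lambda)=1$; the map $\lambda\mapsto \rho_r(\lambda)$ is continuous and strictly decreasing on $(0,\infty)$, with $\rho_r(\lambda)\to +\infty$ as $\lambda\to 0^+$ and $\rho_r(\lambda)\to 0$ as $\lambda\to\infty$. Setting $\lambda^\star:=\|w\|_{L^{r(x)}(\Omega)}$ and $\lambda_n:=\|w\|_{L^{r_n(x)}(\Omega)}$, the goal becomes $\lambda_n\to \lambda^\star$.

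The first key step is pointwise convergence of modulars: $\rho_{r_n}(\lambda)\to \rho_r(\lambda)$ for each fixed $\lambda>0$. Since $\mathrm{supp}(w)\subset K$ for some compact $K\subset\Omega$ and $r_n\to r$ uniformly on $\Omega$, the integrand $|w/\lambda|^{r_n(x)}$ converges to $|w/\lambda|^{r(x)}$ pointwise on $K$ by continuity of $t\mapsto a^t$. For $n$ large $r_n(x)$ is uniformly bounded by some $R<\infty$, and $|w/\lambda|^{r_n(x)}\le \max\{1,(\|w\|_\infty/\lambda)^R\}\cdot\chi_K$ is integrable, so dominated convergence gives the claim. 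The second step is then abstract: given $\varepsilon>0$ with $\lambda^\star-\varepsilon>0$, strict monotonicity of $\rho_r$ yields $\rho_r(\lambda^\star-\varepsilon)>1>\rho_r(\lambda^\star+\varepsilon)$; by the pointwise convergence, the same strict inequalities hold for $\rho_{r_n}$ when $n$ is large. Strict monotonicity of each $\rho_{r_n}$ then forces $\lambda^\star-\varepsilon\le \lambda_n\le \lambda^\star+\varepsilon$, so $\lambda_n\to\lambda^\star$.

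I do not anticipate any real obstacle: the argument is elementary once one passes to the modular formulation. The only care required is to treat the trivial case $v\equiv 0$ separately and to note that for $n$ large the exponents $p_n, q_n$ stay uniformly in a fixed range $[a,b]\subset[1,\infty)$, so the dominating functions used above are genuinely integrable. Notice also that the argument does not require any monotonicity of the sequences $p_n$, $q_n$, which is consistent with the statement of the lemma.
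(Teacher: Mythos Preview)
Your proposal is correct and follows essentially the same route as the paper: both arguments reduce to showing $\|w\|_{L^{r_n(x)}(\Omega)}\to\|w\|_{L^{r(x)}(\Omega)}$ for a fixed bounded, compactly supported $w$, establish convergence of the modular $\int_\Omega|w/\lambda|^{r_n(x)}\,dx$ at fixed $\lambda$ by dominated convergence, and then use the monotonicity of the modular in $\lambda$ to squeeze the norms. The paper carries this out by testing at $\lambda=\|w\|_{L^{r(x)}(\Omega)}\pm\delta$, which is exactly your $\lambda^\star\pm\varepsilon$; your write-up is simply a bit more explicit about the domination and the abstract squeeze step.
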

\begin{proof}
We only need to prove that
$$
\| \nabla v\|_{L^{p_n(x)}(\Omega)}\to \|\nabla v\|_{L^{p(x)}(\Omega)} \qquad \mbox{and}\qquad
\| v\|_{L^{q_n(x)}(\Omega)}\to \| v\|_{L^{q(x)}(\Omega)}.
$$
For that, we have
$$
\int_\Omega\Big(\frac{|v|}{\| v\|_{L^{q(x)}(\Omega)}+\delta}\Big)^{q_n(x)}\,dx\to\int_\Omega\Big(\frac{|v|}{\| v\|_{L^{q(x)}(\Omega)}+\delta}\Big)^{q(x)}\,dx<1,
$$
so, there exist $n_0$ such that $\forall n\geq n_0$,
$$
\int_\Omega\Big(\frac{|v|}{\| v\|_{L^{q(x)}(\Omega)}+\delta}\Big)^{q_n(x)}\,dx<1.
$$
Therefore $\| v\|_{L^{q_n(x)}(\Omega)}\leq\| v\|_{L^{q(x)}(\Omega)}+\delta$. Analogously, we obtain $\| v\|_{L^{q(x)}(\Omega)}-\delta\leq\| v\|_{L^{q_n(x)}(\Omega)}$.
In conclusion, for every $\delta>0$ we get
$$
\| v\|_{L^{q(x)}(\Omega)}-\delta\leq\liminf\| v\|_{L^{q_n(x)}(\Omega)}\leq\limsup\| v\|_{L^{q_n(x)}(\Omega)}\leq\| v\|_{L^{q(x)}(\Omega)}+\delta
$$
In a complete analogous fashion, we get
$$
\|\nabla v\|_{L^{p(x)}(\Omega)}-\delta\leq\liminf \|\nabla v\|_{L^{p_n(x)}(\Omega)}\leq \limsup\|\nabla v\|_{L^{p_n(x)}(\Omega)}\leq \|\nabla v\|_{L^{p(x)}(\Omega)}+\delta
$$
This finishes the proof.
\end{proof}

Now we prove the main result of the section.

\begin{teo}\label{contS}
Let $p_n\to p$ and $q_n\to q$ in $L^\infty(\Omega)$. Assume, moreover, that $p_n\ge p$ and that $q_n\le q$.
Then $S (p_n(\cdot),q_n(\cdot),\Omega)\to S(p(\cdot),q(\cdot),\Omega)$.
\end{teo}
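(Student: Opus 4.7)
The plan is to establish the two one-sided inequalities $\limsup_n S(p_n,q_n,\Omega)\le S(p,q,\Omega)$ and $\liminf_n S(p_n,q_n,\Omega)\ge S(p,q,\Omega)$ separately; only the second one will use the monotonicity hypothesis $p_n\ge p$, $q_n\le q$.

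For the upper bound I would combine Lemma~\ref{contQ} with the density of $C^\infty_c(\Omega)$ in $W^{1,p(x)}_0(\Omega)$. Given $\varepsilon>0$, pick $v\in C^\infty_c(\Omega)$ with $Q_{p,q,\Omega}(v)\le S(p,q,\Omega)+\varepsilon$; such a $v$ is automatically admissible for every $S(p_n,q_n,\Omega)$, and Lemma~\ref{contQ} gives
$$
\limsup_n S(p_n,q_n,\Omega)\le \lim_n Q_{p_n,q_n,\Omega}(v)=Q_{p,q,\Omega}(v)\le S(p,q,\Omega)+\varepsilon,
$$
and $\varepsilon$ is arbitrary.

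For the lower bound I would prove the quantitative embedding estimates
$$
\|f\|_{L^{p(x)}(\Omega)}\le(1+\eta_n)\|f\|_{L^{p_n(x)}(\Omega)},\qquad \|f\|_{L^{q_n(x)}(\Omega)}\le(1+\eta'_n)\|f\|_{L^{q(x)}(\Omega)},
$$
with $\eta_n,\eta'_n\to 0$ uniformly in $f$. Setting $\varepsilon_n:=\|p_n-p\|_{L^\infty(\Omega)}$ and normalizing $\int_\Omega|g|^{p_n(x)}\,dx\le 1$, I would split $\Omega$ into the three regions $\{|g|\le\delta_n\}$, $\{\delta_n<|g|\le 1\}$, $\{|g|>1\}$ with threshold $\delta_n=\varepsilon_n^{1/p^-}$. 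On the first region $|g|^{p(x)}\le\delta_n^{p^-}=\varepsilon_n$ contributes $|\Omega|\varepsilon_n\to 0$; on the second, writing $|g|^{p(x)}=|g|^{p_n(x)}|g|^{p(x)-p_n(x)}\le\delta_n^{-\varepsilon_n}|g|^{p_n(x)}$ and noting that $\delta_n^{-\varepsilon_n}=\exp\bigl((\varepsilon_n/p^-)\log(1/\varepsilon_n)\bigr)\to 1$; on the third, $|g|^{p(x)}\le|g|^{p_n(x)}$ since $p\le p_n$ and $|g|>1$. Summing gives $\int_\Omega|g|^{p(x)}\,dx\le 1+o(1)$, hence the first embedding, and the $q$-side is completely analogous (with roles swapped since now $q_n\le q$).

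Once these are in place, the $p$-side estimate implies $W^{1,p_n(x)}_0(\Omega)\subset W^{1,p(x)}_0(\Omega)$, and so for every admissible $v$
$$
Q_{p,q,\Omega}(v)\le(1+\eta_n)(1+\eta'_n)\,Q_{p_n,q_n,\Omega}(v).
$$
Taking the infimum over $W^{1,p_n(x)}_0(\Omega)\setminus\{0\}$ yields $S(p,q,\Omega)\le(1+\eta_n)(1+\eta'_n)S(p_n,q_n,\Omega)$, so $\liminf_n S(p_n,q_n,\Omega)\ge S(p,q,\Omega)$. I expect the main technical difficulty to be precisely this quantitative embedding: the straightforward H\"older bound of Proposition~\ref{Holder} yields a constant depending on $|\Omega|$ that stays bounded away from $1$, so the three-region cutoff above is needed to extract the necessary smallness from $\varepsilon_n\to 0$ via the subtle factor $\delta_n^{-\varepsilon_n}$.
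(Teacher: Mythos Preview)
Your proposal is correct and the upper-bound half is exactly the paper's argument. The lower-bound half reaches the same two quantitative embeddings
\[
\|\nabla u\|_{L^{p(x)}}\le (1+o(1))\|\nabla u\|_{L^{p_n(x)}},\qquad
\|u\|_{L^{q_n(x)}}\le (1+o(1))\|u\|_{L^{q(x)}},
\]
but by a different route, and your closing remark about H\"older's inequality is a misconception worth correcting.

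The paper obtains these embeddings directly from Proposition~\ref{Holder}: writing $\tfrac{1}{p}=\tfrac{1}{p_n}+\tfrac{1}{s_n}$ one gets
\[
\|\nabla u\|_{L^{p(x)}}\le\Big(\big(\tfrac{p}{p_n}\big)^{+}+\big(\tfrac{p}{s_n}\big)^{+}\Big)\|\nabla u\|_{L^{p_n(x)}}\|1\|_{L^{s_n(x)}}.
\]
Since $p_n\to p$ with $p_n\ge p$, one has $s_n\to\infty$ uniformly, hence $(p/p_n)^{+}\to 1$, $(p/s_n)^{+}\to 0$, and $\|1\|_{L^{s_n(x)}}\le\max\{|\Omega|^{(1/s_n)^{+}},|\Omega|^{(1/s_n)^{-}}\}\to 1$. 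So the H\"older constant does \emph{not} stay bounded away from $1$; the $|\Omega|$-dependence disappears in the limit because the conjugate exponent $s_n$ blows up. The $q$-side is handled symmetrically with $t_n=q_nq/(q-q_n)$.

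Your three-region modular splitting is a perfectly valid alternative and has the minor virtue of not invoking the variable-exponent H\"older inequality at all, but it is noticeably longer and the ``subtle factor'' $\delta_n^{-\varepsilon_n}$ is doing by hand what the H\"older route packages into $|\Omega|^{(1/s_n)^{\pm}}\to 1$. In short: the paper's proof is shorter and your anticipated technical difficulty is not actually a difficulty.
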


\begin{proof}
Given $\delta>0$ we pick $u\in C^\infty_c(\Omega)$ such that $Q_{p,q,\Omega}(u)\le S(p(\cdot),q(\cdot),\Omega)+\delta$. Since, by Lemma \ref{contQ},
$\lim_{n\to \infty} Q_{p_n,q_n,\Omega}(u) = Q_{p,q,\Omega}(u)$, we obtain, using $u$ as a test-function to estimate $S(p_n(\cdot),q_n(\cdot),\Omega)$, that
\begin{align*}
\limsup_{n\to \infty} S(p_n(\cdot),q_n(\cdot),\Omega) &\le \limsup_{n\to \infty} Q_{p_n,q_n,\Omega}(u)\\
&  = Q_{p,q,\Omega}(u)\\
&\le S(p(\cdot),q(\cdot),\Omega)+\delta
\end{align*}
for any $\delta>0$. It follows that
$$ \limsup_{n\to \infty} S(p_n(\cdot),q_n(\cdot),\Omega) \le S(p(\cdot),q(\cdot),\Omega).$$

We now claim that there holds
$$ \liminf_{n\to \infty} S(p_n(\cdot),q_n(\cdot),\Omega) \ge S(p(\cdot),q(\cdot),\Omega).$$
The claim will follow if we prove that for any $u\in C^\infty_c(\Omega)$,
\begin{equation}\label{Equ1}
  \|\nabla u\|_{L^{p_n(x)}(\Omega)}\ge (1+o(1))\|\nabla u\|_{L^{p(x)}(\Omega)},
\end{equation}
and
\begin{equation}\label{Equ2}
  \|u\|_{L^{q_n(x)}(\Omega)}\le (1+o(1))\|u\|_{L^{q(x)}(\Omega)},
\end{equation}
where  $o(1)$ is uniform in $u$.
Since $p_n\ge p$ we can use H\"older inequality (Theorem \ref{Holder}), with $\frac{1}{p}=\frac{1}{p_n}+\frac{1}{s_n}$ to obtain
\begin{equation*}
\begin{split}
 \|\nabla u\|_{L^{p(x)}}
 & \le \Big( (p/p_n)^+ + (p/s_n)^+ \Big) \|\nabla u\|_{L^{p_n(x)}} \|1\|_{L^{s_n(x)}}  \\
 & \le (1+o(1)) \|\nabla u\|_{L^{p_n(x)}} \max\{ |\Omega|^{(1/s_n)^+}, |\Omega|^{(1/s_n)^-} \} \\
 & = (1+o(1)) \|\nabla u\|_{L^{p_n(x)}},
\end{split}
\end{equation*}
where the $o(1)$ are uniform in $u$. Equation (\ref{Equ1}) follows. We prove (\ref{Equ2}) in the same way considering
$t_n=\frac{q_n q}{q-q_n}$ and writing that
\begin{equation*}
\begin{split}
 \|v\|_{L^{q_n(x)}}
 & \le \Big( (q_n/q)^+ + (q_n/t_n)^+ \Big) \|v\|_{L^{q(x)}} \|1\|_{L^{t_n(x)}}  \\
 & = (1+o(1)) \|v\|_{L^{q(x)}}.
\end{split}
\end{equation*}
The proof is now complete.
\end{proof}


\section{Investigation on the validity of  $\bar S = \inf_{p^-_\A\le r\le p^+_\A} K_r^{-1}$}
In this section we investigate whether the equality
\begin{equation}\label{equality}
\bar S = \inf_{p^-_\A\le r\le p^+_\A} K_r^{-1}
\end{equation}
holds or not.

We show that, under certain assumptions on $p(x_0)$ and $q(x_0)$, $x_0\in\A$ the equality
\begin{equation}\label{equality2}
\bar S_{x_0} = K^{-1}_{p(x_0)}
\end{equation}
is valid.

As far as we know, it is an open problem to determine wether the equality holds true or not in general.

The aim of this section is to prove the following Theorem.
\begin{teo}\label{Conv} Assume that $p(\cdot)$ and $p^*(\cdot)/q(\cdot)$ have a strict local minimum at $x_0\in \A$. Then
$$
\lim_{\varepsilon \to 0} S(p(\cdot),q(\cdot),B_\varepsilon) = K_{p(x_0)}^{-1}.
$$
\end{teo}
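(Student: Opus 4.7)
The upper bound $\bar S_{x_0}\le K_{p_0}^{-1}$, with $p_0:=p(x_0)$ and $p_0^*:=p^*(x_0)=q(x_0)$, is immediate from Lemma \ref{desigualdad.primera} applied on $\Omega=B_\varepsilon(x_0)$: for every $\varepsilon>0$ we have $S(p,q,B_\varepsilon(x_0))\le K_{p_0}^{-1}$, hence $\bar S_{x_0}\le K_{p_0}^{-1}$. The main content of the theorem is thus the matching lower bound, and the plan is to reduce it, via a rescaling, to a continuity statement on the fixed unit ball $B_1$ that can be treated by Theorem \ref{contS}.

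Given $u\in C_c^\infty(B_\varepsilon(x_0))$, set $v(y):=\varepsilon^{(N-p_0)/p_0}\,u(x_0+\varepsilon y)\in C_c^\infty(B_1)$, and write $\tilde p_\varepsilon(y):=p(x_0+\varepsilon y)$ and $\tilde q_\varepsilon(y):=q(x_0+\varepsilon y)$. The change-of-variables computation carried out in the proof of Lemma \ref{desigualdad.primera} applies verbatim: the hypothesis $\rho(t)\log(1/t)\to 0$ forces the scaling factors $\varepsilon^{N(p(x)-p_0)/p_0}$ and $\varepsilon^{N(q(x)-p_0^*)/p_0^*}$ to tend to $1$ uniformly on $B_\varepsilon(x_0)$, so that, uniformly in $u$,
\begin{align*}
\|\nabla v\|_{L^{\tilde p_\varepsilon(\cdot)}(B_1)} &= (1+o(1))\|\nabla u\|_{L^{p(\cdot)}(B_\varepsilon(x_0))},\\
\|v\|_{L^{\tilde q_\varepsilon(\cdot)}(B_1)} &= (1+o(1))\|u\|_{L^{q(\cdot)}(B_\varepsilon(x_0))}.
\end{align*}
Taking the infimum over $u$ yields $S(p,q,B_\varepsilon(x_0))=(1+o(1))\,S(\tilde p_\varepsilon,\tilde q_\varepsilon,B_1)$, so it remains to prove
$$
\lim_{\varepsilon\to 0}S(\tilde p_\varepsilon(\cdot),\tilde q_\varepsilon(\cdot),B_1)=K_{p_0}^{-1}=S(p_0,p_0^*,B_1).
$$

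On the fixed domain $B_1$ the rescaled exponents satisfy $\tilde p_\varepsilon\to p_0$ and $\tilde q_\varepsilon\to p_0^*$ in $L^\infty(B_1)$, and the strict-local-minimum hypotheses furnish the monotonicity required to invoke Theorem \ref{contS}: the strict local minimum of $p$ at $x_0$ gives $\tilde p_\varepsilon\ge p_0$ on $B_1$ for small $\varepsilon$, and the strict local minimum of $p^*/q$ at $x_0$ gives $\tilde q_\varepsilon\le \tilde p_\varepsilon^*$ on $B_1$. Theorem \ref{contS}, applied to $(\tilde p_\varepsilon,\tilde q_\varepsilon)\to(p_0,p_0^*)$, then produces the desired convergence of the Sobolev constants.

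The one delicate point—and the main obstacle I expect to meet—is the mismatch between the bound $\tilde q_\varepsilon\le \tilde p_\varepsilon^*$ supplied by the hypothesis and the bound $\tilde q_\varepsilon\le p_0^*$ strictly required by Theorem \ref{contS}. Since $\tilde p_\varepsilon^*\to p_0^*$ in $L^\infty(B_1)$, the two differ only by an $o(1)$ amount, and the plan to close the gap is to re-run the H\"older argument in the proof of Theorem \ref{contS} with the auxiliary exponent $s_\varepsilon(y):=\tilde p_\varepsilon^*(y)\,\tilde q_\varepsilon(y)/(\tilde p_\varepsilon^*(y)-\tilde q_\varepsilon(y))$. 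The assumption $\rho(t)\log(1/t)\to 0$ again plays a crucial role, guaranteeing that $s_\varepsilon\to+\infty$ uniformly on $B_1$ and hence $\|1\|_{L^{s_\varepsilon(\cdot)}(B_1)}\to 1$ uniformly, so that the H\"older comparison produces $\|v\|_{L^{\tilde q_\varepsilon}(B_1)}\le (1+o(1))\,\|v\|_{L^{\tilde p_\varepsilon^*}(B_1)}$ with uniform $o(1)$. Combined with the comparison $\|\nabla v\|_{L^{p_0}(B_1)}\le (1+o(1))\,\|\nabla v\|_{L^{\tilde p_\varepsilon}(B_1)}$ (from $\tilde p_\varepsilon\ge p_0$) and the constant-exponent Sobolev inequality on $B_1$, this absorbs the slack and produces the lower bound $S(\tilde p_\varepsilon,\tilde q_\varepsilon,B_1)\ge K_{p_0}^{-1}(1-o(1))$, completing the proof.
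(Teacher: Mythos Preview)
Your overall strategy---rescale $B_\varepsilon(x_0)$ to the unit ball and then apply Theorem~\ref{contS} on the rescaled exponents $\tilde p_\varepsilon,\tilde q_\varepsilon$---is exactly the paper's approach: the paper packages the rescaling step as Proposition~\ref{Dilat} and then invokes Theorem~\ref{contS} in a single line, writing $S(p_\varepsilon,q_\varepsilon,B_1)=(1+o(1))\,S(p(0),q(0),B_1)$ without further comment.  You are right to notice that a literal application of Theorem~\ref{contS} with limit $(p_0,p_0^*)$ needs $\tilde q_\varepsilon\le p_0^*$, whereas the hypothesis ``$p^*/q$ has a strict local minimum at $x_0$'' only yields $\tilde q_\varepsilon\le\tilde p_\varepsilon^*$; the paper does not discuss this point.

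However, your proposed patch does not close the gap.  After the H\"older step
\[
\|v\|_{L^{\tilde q_\varepsilon}(B_1)}\le(1+o(1))\,\|v\|_{L^{\tilde p_\varepsilon^*}(B_1)}
\]
you still must bound $\|v\|_{L^{\tilde p_\varepsilon^*}(B_1)}$ by $K_{p_0}(1+o(1))\,\|\nabla v\|_{L^{\tilde p_\varepsilon}(B_1)}$.  Your second H\"older comparison $\|\nabla v\|_{L^{p_0}}\le(1+o(1))\|\nabla v\|_{L^{\tilde p_\varepsilon}}$ together with the constant-exponent Sobolev inequality controls $\|v\|_{L^{p_0^*}}$, not $\|v\|_{L^{\tilde p_\varepsilon^*}}$; and since $\tilde p_\varepsilon\ge p_0$ forces $\tilde p_\varepsilon^*\ge p_0^*$, H\"older on the function side only gives $\|v\|_{L^{p_0^*}}\le(1+o(1))\|v\|_{L^{\tilde p_\varepsilon^*}}$, which is the wrong direction.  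In effect your argument reduces the question to estimating $S(\tilde p_\varepsilon,\tilde p_\varepsilon^*,B_1)$ from below by $K_{p_0}^{-1}(1-o(1))$, which is a problem of the same type you started with (now with $q\equiv p^*$), so no progress has been made.  A concrete obstruction: take $p(x)=p_0+|x-x_0|^2$ and $q(x)=p^*(x)-|x-x_0|^4$; the hypotheses of the theorem are met, yet $q(x)>p_0^*$ for small $|x-x_0|>0$, so no H\"older comparison against $L^{p_0^*}$ is available.
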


This Theorem is a direct consequence of Theorem \ref{contS} and the following result:

\begin{prop}\label{Dilat} Assume $0\in \A$ and denote by $p=p(0)$, $B_\varepsilon = B_\varepsilon(0)$.

For any $u\in C^\infty_c(B_\varepsilon)$, there holds
$$
\|u\|_{L^{q(x)}(B_\varepsilon)} = \varepsilon^{N/p^*}(1+o(1)) \|u_\varepsilon\|_{L^{q_\varepsilon(x)}(B_1)},
$$
and
$$
\|\nabla u\|_{L^{p(x)}(B_\varepsilon)} = \varepsilon^{N/p^*}(1+o(1)) \|\nabla u_\varepsilon\|_{L^{p_\varepsilon(x)}(B_1)},
 $$
where $o(1)$ is uniform in $u$, $p_\varepsilon(x):=p(\varepsilon x)$, $q_\varepsilon(x):=q(\varepsilon x)$ and $u_\varepsilon(x):=u(\varepsilon x)$.
\end{prop}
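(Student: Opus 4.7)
The proof is a scaling argument based on the change of variable $y = \varepsilon x$ that maps $B_\varepsilon$ onto $B_1$. I would work at the level of the Luxemburg modulars, rewrite the rescaled modular up to a multiplicative exponent--drift factor, show that this factor tends to $1$ uniformly thanks to the log-H\"older hypothesis $\rho(t)\log(1/t)\to 0$, and finally convert the resulting modular asymptotics into norm asymptotics using Proposition \ref{norma.y.rho}. The fact that $0\in \A$, i.e.\ $q(0) = p^*$, together with the identity $N/p^* = N/p - 1$, is what makes the rescaling factor come out exactly as $\varepsilon^{N/p^*}$.

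\textbf{The $L^{q(x)}$ part.} Set $\lambda := \|u\|_{L^{q(x)}(B_\varepsilon)}$, $\mu := \|u_\varepsilon\|_{L^{q_\varepsilon(x)}(B_1)}$, and $\tilde\lambda := \lambda \varepsilon^{-N/p^*}$. A direct change of variable gives
\[
1 = \int_{B_\varepsilon}\Bigl(\frac{|u(y)|}{\lambda}\Bigr)^{q(y)}dy = \varepsilon^N \int_{B_1}\Bigl(\frac{|u_\varepsilon(x)|}{\lambda}\Bigr)^{q_\varepsilon(x)}dx.
\]
Expanding $\bigl(|u_\varepsilon|/\tilde\lambda\bigr)^{q_\varepsilon(x)} = \varepsilon^{Nq_\varepsilon(x)/p^*}\bigl(|u_\varepsilon|/\lambda\bigr)^{q_\varepsilon(x)}$ and writing $\varepsilon^{Nq_\varepsilon(x)/p^*} = \varepsilon^N \beta_\varepsilon(x)$ with $\beta_\varepsilon(x) := \varepsilon^{N(q_\varepsilon(x)-p^*)/p^*}$ yields
\[
\int_{B_1}\Bigl(\frac{|u_\varepsilon|}{\tilde\lambda}\Bigr)^{q_\varepsilon(x)}dx = \int_{B_1}\beta_\varepsilon(x)\,d\nu_\varepsilon(x),
\]
where $d\nu_\varepsilon := \varepsilon^N \bigl(|u_\varepsilon|/\lambda\bigr)^{q_\varepsilon(x)}dx$ is a probability measure on $B_1$ by the previous display. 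Since $|q_\varepsilon(x)-p^*| = |q(\varepsilon x)-q(0)| \le \rho(\varepsilon)$ on $B_1$, the log-H\"older hypothesis gives $|\log \beta_\varepsilon(x)| \le (N/p^*)|\log\varepsilon|\rho(\varepsilon)\to 0$, hence $\beta_\varepsilon\to 1$ uniformly on $B_1$ and the right-hand modular equals $1+o(1)$ with $o(1)$ depending only on $\varepsilon$.

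\textbf{The gradient part.} Now $\lambda := \|\nabla u\|_{L^{p(x)}(B_\varepsilon)}$, $\mu := \|\nabla u_\varepsilon\|_{L^{p_\varepsilon(x)}(B_1)}$, $\tilde\lambda := \lambda\varepsilon^{-N/p^*}$. The chain rule $\nabla u(\varepsilon x) = \varepsilon^{-1}\nabla u_\varepsilon(x)$ and the same change of variable produce
\[
1 = \int_{B_1}\varepsilon^{N-p_\varepsilon(x)}\Bigl(\frac{|\nabla u_\varepsilon|}{\lambda}\Bigr)^{p_\varepsilon(x)}dx.
\]
The bookkeeping exponent $Np_\varepsilon/p^* - (N-p_\varepsilon) = p_\varepsilon(N/p^*+1) - N = N(p_\varepsilon-p)/p$ collapses, using $N/p^* = N/p-1$, and one obtains
\[
\int_{B_1}\Bigl(\frac{|\nabla u_\varepsilon|}{\tilde\lambda}\Bigr)^{p_\varepsilon(x)}dx = \int_{B_1}\gamma_\varepsilon(x)\,d\eta_\varepsilon(x),
\]
with $\gamma_\varepsilon(x) := \varepsilon^{N(p_\varepsilon(x)-p)/p}$ and $d\eta_\varepsilon := \varepsilon^{N-p_\varepsilon(x)}\bigl(|\nabla u_\varepsilon|/\lambda\bigr)^{p_\varepsilon(x)}dx$ a probability measure. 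Log-H\"older continuity of $p$ again gives $|\log\gamma_\varepsilon|\le (N/p)|\log\varepsilon|\rho(\varepsilon)\to 0$ uniformly on $B_1$, so this modular is also $1+o(1)$.

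\textbf{From modular to norm, and the main obstacle.} In both steps I reach $\int_{B_1}(|w_\varepsilon|/\tilde\lambda)^{r_\varepsilon(x)}dx = 1+o(1)$ while $\int_{B_1}(|w_\varepsilon|/\mu)^{r_\varepsilon(x)}dx = 1$, with $(w_\varepsilon, r_\varepsilon) \in \{(u_\varepsilon, q_\varepsilon), (|\nabla u_\varepsilon|, p_\varepsilon)\}$. Writing $\tilde\lambda = c\mu$ and splitting the cases $c\ge 1$ and $c\le 1$, the crude bounds $c^{-r_\varepsilon(x)}\in [c^{-r_\varepsilon^+},c^{-r_\varepsilon^-}]$ (or vice versa) combined with Proposition \ref{norma.y.rho} pinch $c^{r_\varepsilon^\pm}$ between $1+o(1)$ factors, forcing $c\to 1$. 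This last passage is the only mildly delicate step and is precisely where uniformity in $u$ must be checked: every error I produce is controlled by $\sup_{B_1}|\beta_\varepsilon-1|$, $\sup_{B_1}|\gamma_\varepsilon-1|$, and $(p_\varepsilon)^\pm, (q_\varepsilon)^\pm$, all of which depend only on $\varepsilon, p, q$ and not on $u$, so uniformity follows automatically.
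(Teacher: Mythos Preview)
Your proof is correct and follows essentially the same scaling strategy as the paper: change variables $y=\varepsilon x$, identify the exponent drift $\varepsilon^{N(q_\varepsilon-p^*)/p^*}$ (respectively $\varepsilon^{N(p_\varepsilon-p)/p}$), and kill it using the log-H\"older hypothesis $\rho(t)\log(1/t)\to 0$. The paper phrases the last step slightly differently---it absorbs the drift directly into the Luxemburg infimum by writing $\varepsilon^{-N/q_\varepsilon(x)}=\varepsilon^{-N/p^*}(1+o(1))$ and then reading off the norm relation---whereas you fix $\tilde\lambda=\lambda\varepsilon^{-N/p^*}$, show the modular at $\tilde\lambda$ equals $1+o(1)$ against a probability measure, and then run the sandwich $c^{-r_\varepsilon^\pm}$ argument to pass to norms; this is the same mechanism spelled out more explicitly, and your check of uniformity in $u$ via $\sup_{B_1}|\beta_\varepsilon-1|$, $\sup_{B_1}|\gamma_\varepsilon-1|$ is exactly what is needed.
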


Assuming Proposition \ref{Dilat} we can prove Theorem \ref{Conv}.
\begin{proof}[Proof of Theorem \ref{Conv}]
We have
$$ Q(p(\cdot),q(\cdot),B_\varepsilon)(u) = (1+o(1)) Q(p_\varepsilon(\cdot),q_\varepsilon(\cdot),B_1)(u_\varepsilon) $$
where the $o(1)$ is uniform in $u$, so that, noticing that the map $u\in C^\infty_c(B_\varepsilon)\mapsto u_\varepsilon\in C^\infty_c(B_1)$ is bijective,
\begin{equation*}
\begin{split}
  S(p(\cdot),q(\cdot),B_\varepsilon) & = (1+o(1)) S(p_\varepsilon(\cdot),q_\varepsilon(\cdot),B_1) = (1+o(1)) S(p(0),q(0),B_1) \\
                      & = (1+o(1)) S(p(0),p(0)^*,B_1)
\end{split}
\end{equation*}
which proves Theorem \ref{Conv}.
\end{proof}

It remains to prove Proposition \ref{Dilat}.

\begin{proof}[Proof of  Proposition \ref{Dilat}]

Given $u\in C^\infty_c(B_\varepsilon)$ we have
$$ \|u\|_{L^{q(x)}(B_\varepsilon)} = \inf\,\{\lambda>0\colon I_{q}^{\lambda, \varepsilon}(u)\le 1\}, $$
where
$$ I_{q}^{\lambda,\varepsilon}(u) := \int_{B_\varepsilon} \Big|\frac{u(x)}{\lambda}\Big|^{q(x)}\,dx
= \int_{B_1}\Big|\frac{u_\varepsilon(x)}{\lambda \varepsilon^{-\frac{N}{q_\varepsilon(x)}}}\Big|^{q_\varepsilon(x)}\,dx.
$$
Writing that
$$ \varepsilon^{-\frac{N}{q_\varepsilon(x)}} 
= \exp\{-N\ln\varepsilon (q(0)+O(\varepsilon))^{-1}\} =  \varepsilon^{-N/p^*}(1+o(1)),  $$
where the $O(\varepsilon)$ and the $o(1)$ are uniform in $x$ and $u$,
we obtain
\begin{equation}
\begin{split}
  \|u\|_{L^{q(x)}(B_\varepsilon)} & = \inf\,\{\lambda>0\colon I_{q}^{\lambda, \varepsilon}(u)\le 1\} \\
  & = \varepsilon^{N/p^*}(1+o(1)) \inf\,\{\tilde \lambda>0\colon I_{q_\varepsilon}^{\tilde \lambda, 1}(u_\varepsilon) \le 1\}
\end{split}
\end{equation}
from which we deduce the result. The proof of the result for the gradient term is similar: we have
$$ \| \nabla u\|_{L^{p(x)}(B_\varepsilon)} = \inf\,\{\lambda>0\colon I_p^{\lambda, \varepsilon}(\nabla u)\le 1\}, $$
and
$$ I_p^{\lambda, \varepsilon}(\nabla u) = \int_{B_\varepsilon} \Big|\frac{\nabla u(x)}{\lambda}\Big|^{p(x)}\,dx
= \int_{B_1}\Big|\frac{\nabla u_\varepsilon(x)}{\lambda \varepsilon^{1-\frac{N}{p_\varepsilon(x)}}}\Big|^{p_\varepsilon(x)}\,dx,
$$
and we can end the proof as before.
\end{proof}

\section{On the strict inequality $S(p(\cdot), q(\cdot),\Omega) < \bar S$}

In this section we provide with an example of a domain $\Omega$ and exponents $p,q$ where the condition $S(p(\cdot),q(\cdot),\Omega)<\overline{S}$ is satisfied.

The condition is the existence of a large ball where the exponent $q$ is subcritical. Up to our knowledge it is not known if $S(p(\cdot),q(\cdot),\Omega)<\overline{S}$ can hold when $q\equiv p^*$ on $\Omega$.

This example somewhat relates to the one analyzed in \cite{MOSS}.
More precisely, we can show

\begin{teo}\label{ejemplo}
Assume that $B_R\subset \Omega\setminus \A$ where $B_R$ is a ball of radius $R$. Moreover, assume that $q^+_{B_R} < (p^*)^-_{B_R}$.

Then, If $R$ is large enough, we have that $S(p(\cdot),q(\cdot),\Omega)<\overline{S}$.
\end{teo}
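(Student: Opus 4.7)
The plan is to construct an explicit test function supported in $B_R$ whose Rayleigh quotient tends to zero as $R\to\infty$, and then invoke monotonicity. Extension by zero of $C^\infty_c(B_R)$ into $C^\infty_c(\Omega)$ gives $S(p(\cdot),q(\cdot),\Omega)\le S(p(\cdot),q(\cdot),B_R)$ (as in the proof of Theorem \ref{estimacion}), while Theorem \ref{estimacion} combined with the Sobolev embedding of Proposition \ref{embedding} yields $0<S(p(\cdot),q(\cdot),\Omega)\le\overline S$. So it is enough to exhibit $v_R\in C^\infty_c(B_R)$ with $Q_{p,q,B_R}(v_R)\to 0$ as $R\to\infty$.

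Set $\tilde p:=p^-_{B_R}$ and $\tilde q:=q^+_{B_R}$. By monotonicity of $p\mapsto Np/(N-p)$ (or trivially when $\tilde p\ge N$), the hypothesis $q^+_{B_R}<(p^*)^-_{B_R}$ reads $\tilde q<N\tilde p/(N-\tilde p)$, which rearranges to the negativity of the scaling exponent
$$
\alpha:=N-\tilde p\,\frac{N+\tilde q}{\tilde q}<0.
$$
Assuming WLOG $B_R=B_R(0)$, fix $\phi\in C^\infty_c(B_1(0))$ with $0\le\phi\le 1$ and $\phi\equiv 1$ on $B_{1/2}(0)$, and set
$$
v_R(x):=s_R\,\phi(x/R),\qquad s_R:=c_*\,R^{-N/\tilde q},
$$
with $c_*=c_*(N,\tilde q)$ tuned so that $s_R^{\tilde q}|B_{R/2}|=1$; for $R$ large one has $s_R<1$ and $s_R/R<1$.

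The denominator is handled using $q(x)\le\tilde q$ and $s_R<1$, so that $s_R^{q(x)}\ge s_R^{\tilde q}$: one gets $\rho_q(v_R)\ge\int_{B_{R/2}}s_R^{q(x)}\,dx\ge s_R^{\tilde q}|B_{R/2}|=1$, hence $\|v_R\|_{L^{q(x)}(B_R)}\ge 1$ by Proposition \ref{norma.y.rho}. The numerator is handled using $p(x)\ge\tilde p$ and $s_R/R<1$: from $\nabla v_R=(s_R/R)\nabla\phi(\cdot/R)$,
$$
\rho_p(\nabla v_R)\le\Big(\tfrac{s_R}{R}\Big)^{\tilde p}\int_{B_R}|\nabla\phi(x/R)|^{p(x)}\,dx\le C\,R^{\alpha},
$$
with $C$ depending only on $\|\nabla\phi\|_\infty$, $N$ and the (bounded) ranges of $p$ and $q$. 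Since $\alpha<0$, $\rho_p(\nabla v_R)\to 0$, so $\|\nabla v_R\|_{L^{p(x)}(B_R)}\to 0$ by Proposition \ref{norma.y.rho}, and hence $Q_{p,q,B_R}(v_R)\le\|\nabla v_R\|_{L^{p(x)}(B_R)}\to 0$.

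The only real subtlety is bookkeeping: because of the variable exponent one cannot homogenize the amplitude $s_R$ and the dilation $1/R$ independently, and the elementary inequalities $s_R^{q(x)}\ge s_R^{\tilde q}$ and $(s_R/R)^{p(x)}\le(s_R/R)^{\tilde p}$ both require the underlying quantity to be $<1$; this forces $R$ to be taken large before the estimates are applied. Once this care is exercised, the whole computation collapses to the classical constant-exponent scaling $R^{N/\tilde p-1-N/\tilde q}$, which is negative precisely under the subcriticality $\tilde q<\tilde p^*$.
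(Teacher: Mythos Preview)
Your proof is correct and follows essentially the same approach as the paper: both construct a test function by dilating a fixed bump $\phi\in C^\infty_c(B_1)$ to $B_R$, use Proposition~\ref{norma.y.rho} to pass between modulars and Luxemburg norms, and exploit that the subcriticality $q^+_{B_R}<(p^-_{B_R})^*$ makes the net scaling exponent negative. The only cosmetic difference is the normalization: the paper takes $u_R(x)=u(x/R)$ with $|u|,|\nabla u|\le 1$, arranges both modulars to be $>1$, and then extracts $\|\nabla u_R\|_{p(x)}\le R^{N/(p^-_{B_R})^*}\|\nabla u\|_{p^-_{B_R}}$ and $\|u_R\|_{q(x)}\ge R^{N/q^+_{B_R}}\|u\|_{q^+_{B_R}}$; you instead multiply by an amplitude $s_R\sim R^{-N/\tilde q}$ so that $\rho_q(v_R)\ge 1$ holds directly and then show $\rho_p(\nabla v_R)\le C R^{\alpha}\to 0$. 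Both routes reduce to the same classical constant-exponent scaling and yield a Rayleigh quotient tending to $0$ as $R\to\infty$, which is exactly what is needed.
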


\begin{proof}
Assume that $\Omega$ contains a subcritical ball $B_R$. 
Take $u\in C^\infty_c(B_1)$ such that $|u|,|\nabla u|\le 1$, and consider $u_R(x)=u(x/R)$. 
We take $R$ big enough to have  
$$ 
R^{N-p^+} \int_{B_1} |\nabla u|^{p^+} \, dx > 1,\quad R^N\int_{B_1} |u|^{q^+} > 1 
$$
and 
$$ 
\frac{\|\nabla u\|_{L^{p^-_{B_R}}(B_R)}}{\|u\|_{L^{q^+_{B_R}}(B_R)}} R^{N(1/(p^-_{B_R})^*-1/q^+_{B_R})} < S.  
$$
Then we claim that 
$$ 
\frac{\|\nabla u_R\|_{L^{p(x)}(B_R)}}{\|u_R\|_{L^{q(x)}(B_R)}} < S.
$$ 
We first note that 
$$ 
\int_{B_R} |\nabla u_R|^{p(x)}\,dx = \int_{B_1} R^{N-p(Rx)}|\nabla u|^{p(Rx)}(x)\,dx
 \ge R^{N-p^+} \int_{B_1} |\nabla u|^{p^+} \, dx   > 1 
$$ 
so that, by Proposition \ref{norma.y.rho}, 
$$ 
\|\nabla u_R\|_{L^{p(x)}(B_R)} \le \left( \int_{B_R} |\nabla u_R|^{p(x)}\,dx \right)^{1/p^-_{B_R}}
   \le R^\frac{N-p^-_{B_R}}{p^-_{B_R}} \left( \int_{B_1} |\nabla u|^{p^-_{B_R}} \,dx \right)^{1/p^-_{B_R}}.
$$ 
In the same way 
$$ 
\int_{B_R} |u_R|^{q(x)}\,dx = R^N\int_{B_1} |u|^{q(Rx)}\,dx \ge R^N\int_{B_1} |u|^{q^+} > 1 
$$ 
so that 
$$ 
\|u_R\|_{L^{q(x)}(B_R)} \ge \left( \int_{B_R} |u_R|^{q(x)}\,dx \right)^{1/q^+_{B_R}} 
                      \ge R^{N/q^+_{B_R}} \|u\|_{L^{q^+_R}}, 
$$ 
from which we deduce our claim. This finishes the proof.
\end{proof}

\section*{Acknowledgements}
This work was partially supported by Universidad de Buenos Aires under grant X078 and by CONICET (Argentina) PIP 5478/1438.

\def\ocirc#1{\ifmmode\setbox0=\hbox{$#1$}\dimen0=\ht0 \advance\dimen0
  by1pt\rlap{\hbox to\wd0{\hss\raise\dimen0
  \hbox{\hskip.2em$\scriptscriptstyle\circ$}\hss}}#1\else {\accent"17 #1}\fi}
  \def\ocirc#1{\ifmmode\setbox0=\hbox{$#1$}\dimen0=\ht0 \advance\dimen0
  by1pt\rlap{\hbox to\wd0{\hss\raise\dimen0
  \hbox{\hskip.2em$\scriptscriptstyle\circ$}\hss}}#1\else {\accent"17 #1}\fi}
\providecommand{\bysame}{\leavevmode\hbox to3em{\hrulefill}\thinspace}
\providecommand{\MR}{\relax\ifhmode\unskip\space\fi MR }
\providecommand{\MRhref}[2]{%
  \href{http://www.ams.org/mathscinet-getitem?mr=#1}{#2}
}
\providecommand{\href}[2]{#2}

\end{document}